\newcommand{\excise}[1]{}
\newtheorem*{rep@theorem}{\rep@title}
\newcommand{\newreptheorem}[2]{%
\newenvironment{rep#1}[1]{%
 \def\rep@title{#2 \ref{##1}}%
 \begin{rep@theorem}}%
 {\end{rep@theorem}}}
\newtheorem{thm}{Theorem}[section]
\newtheorem{lemma}[thm]{Lemma}
\newtheorem{cor}[thm]{Corollary}
\newtheorem{prop}[thm]{Proposition}
\newtheorem{conj}[thm]{Conjecture}
\theoremstyle{definition}
\newtheorem{example}[thm]{Example}
\newtheorem{remark}[thm]{Remark}
\newtheorem{defn}[thm]{Definition}
\numberwithin{equation}{section}
\newcommand{\ring}[1]{\ensuremath{\mathbb{#1}}}
\renewcommand\>{\rangle}
\newcommand\<{\langle}
\newcommand\NN{\ring{N}}
\newcommand\QQ{\ring{Q}}
\newcommand\ZZ{\ring{Z}}
\renewcommand\aa{{\vec a}}
\DeclareMathOperator\image{Im} 
\begin{document}

\mbox{}
\title{On the set of elasticities in numerical monoids\qquad}
\author{Thomas Barron}
\address{Mathematics Department\\University of Kentucky\\Lexington, KY 40506}
\email{thomas.barron@uky.edu}
\author{Christopher O'Neill}
\address{Mathematics Department\\Texas A\&M University\\College Station, TX 77843}
\email{coneill@math.tamu.edu}
\author{Roberto Pelayo}
\address{Mathematics Department\\University of Hawai`i at Hilo\\Hilo, HI 96720}
\email{robertop@hawaii.edu}

\date{\today}

\begin{abstract}
\hspace{-2.05032pt}
In an atomic, cancellative, commutative monoid $S$, the elasticity of an element provides a coarse measure of its non-unique factorizations by comparing the largest and smallest values in its set of factorization lengths (called its length set).  In this paper, we show that the set of length sets $\mathcal L(S)$ for any arithmetical numerical monoid $S$ can be completely recovered from its set of elasticities $R(S)$; therefore, $R(S)$ is as strong a factorization invariant as $\mathcal L(S)$ in this setting.   For general numerical monoids, we describe the set of elasticities as a specific collection of monotone increasing sequences with a common limit point of $\max R(S)$.
\end{abstract}
\maketitle


\vspace{-1cm}

\section{Introduction}\label{s:intro}

In studying the non-unique factorization theory of atomic monoids, the development of several invariants -- such as delta sets~\cite{delta} and $\omega$-primality~\cite{quasi} -- has provided significant insight.  Of particular interest is the set of length sets $\mathcal L(S)$ for an atomic monoid $S$, which has as its elements the sets of factorization lengths of elements in $S$~\cite{setoflengthsets,geroldingerlengthsets,realizthm}.
The following longstanding conjecture states that, with one exception, the set of length sets is a perfect invariant for the important class of \emph{block monoids} $\mathcal B(G)$ of zero-sum sequences over a finite Abelian group $G$ \cite[Section~7.3]{nonuniq}.  

\begin{conj}\label{c:introblocklensets}
Given two finite Abelian groups $G$ and $G'$ with $|G|, |G'| > 3$, we have $\mathcal L(\mathcal B(G)) = \mathcal L(\mathcal B(G'))$ implies $\mathcal B(G) = \mathcal B(G')$.  
\end{conj}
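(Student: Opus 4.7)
This statement is the classical Characterization Problem for block monoids of finite abelian groups and is a well-known open question in non-unique factorization theory; the sketch below is the natural line of attack rather than a new route to a proof.

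The overarching strategy is to show that $\mathcal L(\mathcal B(G))$ determines enough invariants of $G$ to force $G\cong G'$. First I would extract the Davenport constant $D(G)$ from the elasticity: for $|G|\geq 3$, one has $\rho(\mathcal B(G))=\max\{\sup L/\min L:L\in\mathcal L(\mathcal B(G))\}=D(G)/2$, so $D(G)=D(G')$ is immediate. Next I would read off the coarser data visibly determined by $\mathcal L(\mathcal B(G))$: the delta set $\Delta(\mathcal B(G))$, the set of minima $\{\min L:L\in\mathcal L(\mathcal B(G))\}$, and, crucially, the shape of those $L\in\mathcal L(\mathcal B(G))$ whose maximum equals $D(G)$. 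For the already classified families (cyclic groups, groups of rank two, elementary abelian $p$-groups), deep structural results of Geroldinger, Halter-Koch, Schmid and collaborators translate these data into the exponent $\exp(G)$, the $p$-ranks, and ultimately the isomorphism type of $G$, confirming the conjecture in those cases.

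The central hard step is to recover $|G|$ and the rank of $G$ in full generality. My plan here is to count minimal zero-sum sequences of near-maximal length by examining the multiplicities of length sets $L$ with $\max L = D(G) - j$ for small $j$; such sequences control how often the length sets $\{2,\dots,D(G)\}$ and their near variants appear in $\mathcal L(\mathcal B(G))$. Isolating a distinguished sub-family whose combinatorial size is a known function of $|G|$ and the rank, and then inverting that function, would pin down these invariants. Executing this requires a structure theorem for long zero-sum free sequences that is currently only available in restricted settings.

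The main obstacle, and the reason this conjecture has stood for decades, is the \emph{middle range}: pairs of groups agreeing on $|G|$, $D(G)$, $\exp(G)$, and the $p$-ranks but whose atom structures differ. Distinguishing such pairs from $\mathcal L$ alone seems to require either a global structural theorem describing precisely which subsets of $\{0,1,\dots,D(G)\}$ arise as length sets of elements of $\mathcal B(G)$, or a clean local-global reduction to $p$-primary components compatible with the Davenport-constant data. The hypothesis $|G|,|G'|>3$ excludes the known small exceptions but does not by itself supply either tool, so any complete proof would almost certainly need a genuinely new input into the interplay between zero-sum combinatorics and length sets; I would not expect to bypass this with elementary means.
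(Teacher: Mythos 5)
The statement you were asked about is a \emph{conjecture}: the paper explicitly presents it as a longstanding open problem quoted from Geroldinger and Halter-Koch (the Characterization Problem for block monoids) and offers no proof of it whatsoever --- it appears only as motivation in the introduction, contrasted with the numerical-monoid setting where sets of lengths are known \emph{not} to be a perfect invariant. So there is no argument in the paper to compare yours against. You correctly recognize this, and to your credit you do not claim to have a proof; your write-up is a survey of the standard line of attack (recovering $D(G)=2\rho(\mathcal B(G))$ from the elasticity, reading off $\Delta(\mathcal B(G))$ and the structure of length sets with maximum $D(G)$, and invoking the known resolved cases for cyclic groups, rank-two groups, and elementary abelian $p$-groups) together with an honest identification of where it breaks down.

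Judged as a proof attempt, however, the submission contains no proof: the ``central hard step'' of recovering $|G|$ and the rank of $G$ in general, and of separating groups that agree on $|G|$, $D(G)$, $\exp(G)$, and the $p$-ranks, is exactly the open content of the conjecture, and you acknowledge that the structure theorems needed to execute your counting strategy are available only in restricted settings. That is the correct assessment of the state of the art, but it means the statement remains unproved both in the paper and in your proposal. If your goal in this exercise was to engage with the paper's actual results, the theorems with proofs to reconstruct are Theorem~\ref{t:arithelastset}, Proposition~\ref{p:arithstep}, Theorem~\ref{t:elastlencomp}, and Theorem~\ref{t:maxquasi}, not this conjecture.
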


In contrast to the above conjecture, the authors of~\cite{setoflengthsets} show that two distinct numerical monoids (co-finite, additive submonoids of $\NN$) can have the same length sets.  In this paper, we investigate the elasticity $\rho(n)$ of elements $n$ in a numerical monoid $S$.  This invariant, computed as the quotient of the largest factorization length by the smallest, provides a coarse measure of an element's non-unique factorizations.  We now state our main result concerning the set $R(S) = \{\rho(n) : n \in S\}$ of elasticities of $S$.  

\begin{thm}\label{t:introelastlencomp}
For distinct arithmetical numerical monoids $S = \<a, a + d, \ldots, a + kd\>$ and $S' = \<a', a' + d', \ldots, a' + k'd'\>$, the following are equivalent: 
\begin{enumerate}
\item $R(S) = R(S')$.  
\item $\mathcal L(S) = \mathcal L(S')$.  
\end{enumerate}
\end{thm}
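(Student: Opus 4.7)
The implication $(2) \Rightarrow (1)$ is immediate, since every $\rho(n) \in R(S)$ equals $\max \mathcal L(n) / \min \mathcal L(n)$, so $\mathcal L(S) = \mathcal L(S')$ at once forces $R(S) = R(S')$.

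For the converse, my plan is to show that \emph{each} of the two invariants independently recovers the defining tuple $(a, d, k)$ of an arithmetical numerical monoid. The starting point is the following explicit description of length sets: a length-$L$ factorization of $n \in S$ is a tuple $(c_0, \ldots, c_k) \in \ZZ_{\ge 0}^{k+1}$ with $\sum c_i = L$ and $n = La + d\sum i\, c_i$, so
\begin{equation*}
\mathcal L(n) = \bigl\{\, L \in \ZZ_{\ge 0} : \tfrac{n}{a+kd} \le L \le \tfrac{n}{a} \text{ and } La \equiv n \pmod d \,\bigr\}.
\end{equation*}
Since we may assume $\gcd(a,d) = 1$, each $\mathcal L(n)$ is an arithmetic progression with common difference $d$, and $\max R(S) = (a+kd)/a$ is realized.

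From $\mathcal L(S)$ the parameters are easy to read off: $d$ is the common difference of the progressions, $(a+kd)/a$ is the supremum of the ratios $\max L / \min L$ over $L \in \mathcal L(S)$, and inspecting length sets of elements $n$ large enough for the displayed interval to be fully occupied pins down $a$ and $a+kd$ individually, hence also $k$. The harder half is the parallel recovery from $R(S)$ alone. Here $\max R(S) = (a+kd)/a$ is again immediate, and to extract $a$ and $d$ I would analyze the \emph{sub-extremal} structure of $R(S)$: each $\rho(n) = L/\ell$ with $L \equiv \ell \pmod d$, so the rationals in $R(S)$ just below $\max R(S)$ organize into the countably many monotone sequences promised by the abstract. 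Their denominators trace out the achievable minimal lengths $\ell(n)$, and tracking which rationals $L/\ell$ with $L - \ell \in d\,\ZZ$ occur, together with the rate at which these sequences converge to $(a+kd)/a$, should recover $a$ and $d$, and hence $k$.

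The main obstacle is this last step. Because $R(S)$ retains only the real numbers $L/\ell$ and forgets the pairs $(\ell, L)$, two tuples with $(a+kd)/a = (a'+k'd')/a'$ share a maximum elasticity, and one must prove their full elasticity sets nevertheless differ. I expect this to require a rigidity statement of the form ``the sorted tail of $R(S)$ near $\max R(S)$ determines $(a,d,k)$,'' argued via a careful accounting of which reduced fractions $L/\ell$ arise, and ruling out algebraic coincidences between the allowed $(\ell, L)$ grids of two distinct tuples.
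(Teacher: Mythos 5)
Your direction $(2)\Rightarrow(1)$ is fine, but the strategy for the converse rests on a false premise: neither $\mathcal L(S)$ nor $R(S)$ determines the tuple $(a,d,k)$. By \cite[Theorem~3.2]{setoflengthsets} (restated as Theorem~\ref{t:setoflengthsets} in the paper), the \emph{distinct} monoids $S=\<4,5,6\>$ and $S'=\<6,7,8,9\>$ satisfy $\mathcal L(S)=\mathcal L(S')$, since $d=d'=1$, $a/k=a'/k'=2$, and $\gcd(a,k),\gcd(a',k')\ge 2$; consequently $R(S)=R(S')$ as well. So any argument that ``pins down $a$ and $a+kd$ individually'' from either invariant must break somewhere --- concretely, it breaks because $\mathcal L(S)$ is an unindexed collection of sets (you do not know which length set belongs to which $n$), and what you can actually read off is only $d$ (the common difference of the progressions) and $k/a$ (from $\sup$ of the ratios), not $a$ and $k$ separately when $\gcd(a,k)\ge 2$. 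The proposed ``rigidity statement'' that the tail of $R(S)$ near $\max R(S)$ determines $(a,d,k)$ is likewise false for the same pair of monoids.

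What the theorem actually requires is a characterization of exactly \emph{when} two distinct arithmetical monoids share the invariant, matched against the known characterization for $\mathcal L$. The paper's route: first recover $d$ from $R(S)$ via an explicit formula in the three smallest elasticities (Proposition~\ref{p:arithstep}), and $a/k$ from $\sup R(S)=1+dk/a$ (Remark~\ref{r:arithstep}); then prove the genuinely hard half, namely that if $d=d'$, $a/k=a'/k'$, $\gcd(a',k')=1$ and $\gcd(a,k)\ge 2$, then $R(S)\supsetneq R(S')$. This is done by parametrizing $R(S)$ by ``elasticity tuples'' (Theorem~\ref{t:arithelastset}), embedding $\mathcal E(S')$ into $\mathcal E(S)$ by an elasticity-preserving map, and using a number-theoretic construction (Lemma~\ref{l:extraelast}) to produce a maximal tuple whose elasticity value is achieved by no other tuple and misses the image --- hence lies in $R(S)\setminus R(S')$. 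Your sketch contains no mechanism for this step, and without it the equivalence cannot be established.
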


Therefore, for the class of arithmetical numerical monoids (numerical monoids generated by an arithmetic sequence), the set of elasticities is as strong an invariant as the set of length sets.  In contrast, we also provide Example~\ref{e:elastlencomp}, which gives two non-arithmetical numerical monoids with identical sets of elasticities, but distinct sets of length sets.

After developing our main result in Section~\ref{s:arithmetical}, we provide a full characterization of the set of elasticities for any numerical monoid, thereby completing a coarser description provided by Chapman, Holden, and Moore~\cite{elasticity}.  This characterization (Corollary~\ref{c:elasticityset}) demonstrates the stark contrast between the set of length sets, which is often very large and hard to compute, with the set of elasticities, which we describe as a union of monotonically increasing sequences with a common limit point of $\max R(S)$.  For arithmetical numerical monoids, this characterization of $R(S)$ takes the form of a complete parametrization~(Theorem~\ref{t:arithelastset}).

\section{Background}\label{s:background}

In this section, we provide definitions and previous results related to the elasticity of elements in a numerical monoid.  In what follows, let $\NN$ denote the set of non-negative integers.  Unless otherwise stated, we will assume that $S$ has minimal generating set $\{g_1, \ldots, g_k\}$ with $g_1 < \cdots < g_k$ and $\gcd(g_1, \ldots, g_k) = 1$.    

\begin{defn}\label{d:factorization}
Let $S = \<g_1, \ldots, g_k\>$ be a numerical monoid with minimal generating set $\{g_1, \ldots, g_k\}$, and fix $n \in S$.  An element $\aa = (a_1,\ldots, a_k) \in \NN^k$ is a \emph{factorization} of $n$ if $n = a_1 g_1 + \cdots + a_k g_k$, and its \emph{factorization set} is given by 
$$\mathsf Z(n) = \left\{(a_1, \ldots, a_k) \in \NN^k \, : \, a_1 g_1 + \cdots + a_k g_k = n\right\}.$$
The \emph{length} of the factorization $\aa$, denoted $|\aa|$, is given by $a_1 + \cdots + a_k$.  For each $n$, the \emph{length set} of $n$ is the set $\mathsf L(n) = \left\{|a| \, : \vec a \in \mathsf Z(n)\right\}$, and the \emph{set of length sets} of the monoid $S$ is given by $\mathcal L(S) = \left\{\mathsf L(n) \, : \, n \in S\right\}.$
\end{defn}

\begin{remark}\label{r:length}
While the length set of an element in a numerical monoid is a helpful measure of its non-unique factorizations, some information is lost when passing from $\mathsf Z(n)$ to $\mathsf L(n)$.  For example, in $S = \<3,5,7\>$, the element $10 \in S$ has as its two distinct factorizations $(1,0,1)$ and $(0,2,0)$, both of which have length $2$.  Thus, even though $\mathsf L(10) = \{2\}$ is singleton, the element $10$ has multiple factorizations.  This phenomenon is common in numerical monoids, especially those minimally generated by arithmetic sequences of length $3$ or greater.  See Section~\ref{s:arithmetical} for a more detailed analysis of such~monoids.
\end{remark}

In a numerical monoid, length sets of elements are finite.  Thus, analyzing the relationship between an element's maximal and minimal lengths provides a meaningful, albeit coarse, gauge of the non-uniqueness of its factorizations.  This concept, known as the elasticity of an element, is defined below.

\begin{defn}\label{d:elasticity}
For an element $n \in S$ of a numerical monoid, we denote by 
$$M_S(n)=\max \mathsf L(n) \,\,\,\, \text{and} \,\,\,\, m_S(n)=\min \mathsf L(n)$$
the \emph{maximal} and \emph{minimal length} of $n$, respectively.  The ratio 
$$\rho_S(n) = M_S(n)/m_S(n)$$
is called the \emph{elasticity} of $n$.  When there is no ambiguity, we omit the subscripts and simply write $M(n)$, $m(n)$, and $\rho(n)$.  The \emph{set of elasticities} of $S$ is given by 
$$R(S) = \left\{\rho(n) \, : \, n \in S\right\},$$
and the \emph{elasticity of $S$} is given by the supremum of this set: $\rho(S) = \sup R(S)$.
\end{defn}

\begin{defn}\label{d:arithmetical}
A numerical monoid $S$ is \emph{arithmetical} if it is minimally generated by an arithmetic sequence of positive integers, that is, 
$$S = \<a, a+d, \ldots, a+kd\>$$
for positive integers $a, d$, and $k$.  Unless otherwise stated, when the generating set of a numerical monoid is expressed in the form $a, a + d, \ldots, a + kd$, it is assumed that $\gcd(a,d) = 1$ and $1 \le k < a$.  
\end{defn}

We conclude this section by recalling some relevant results from the literature.  Theorem~\ref{t:holdenmoore} provides some coarse properties of the set of elasticites of a numerical monoid.  Proposition~\ref{p:arithminmax} is a consequence of \cite[Theorem~2.2]{setoflengthsets}, and characterizes the functions $M_S$ and $m_S$ for any arithmetical numerical monoid $S$.  Lastly, Theorem~\ref{t:setoflengthsets} appeared as \cite[Theorem~3.2]{setoflengthsets} and is vital to the proof of Corollary~\ref{c:elastlencomp}.  

\begin{thm}[{\cite[Theorem 2.1 \& Corollary 2.3]{elasticity}}]\label{t:holdenmoore}
If $S$ is a numerical monoid minimally generated by $g_1 < \cdots < g_k$, then $\rho(S) = g_k/g_1$ is the unique accumulation point of $R(S)$, and there exists an $n \in S$ such that $\rho(n) = \rho(S)$.  
\end{thm}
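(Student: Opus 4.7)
The plan is to prove the theorem in three stages: (a) establish both the upper bound $\rho(S) \le g_k/g_1$ and its attainment at $n = g_1 g_k$; (b) exhibit an infinite sequence of distinct elasticities in $R(S)$ converging to $g_k/g_1$ from below, confirming it is an accumulation point; and (c) show that $R(S) \cap [1,c]$ is finite for every $c < g_k/g_1$, which forces $g_k/g_1$ to be the unique accumulation point.

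For stage (a), I would start from the trivial sandwich available to any factorization $\aa = (a_1,\dots,a_k)$ of $n$: summing $g_1 \le g_i \le g_k$ across coordinates yields $g_1 |\aa| \le n \le g_k |\aa|$, hence $M(n) \le n/g_1$ and $m(n) \ge n/g_k$, giving $\rho(n) \le g_k/g_1$. For attainment, the element $n = g_1 g_k$ admits the two factorizations $(g_k,0,\ldots,0)$ and $(0,\ldots,0,g_1)$ which saturate these bounds, yielding $\rho(g_1 g_k) = g_k/g_1 = \rho(S)$.

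For stage (b), I would consider the family $n_t = t g_1 g_k + g_1$ for $t \ge 1$. The factorizations $(tg_k + 1, 0, \ldots, 0)$ and $(1, 0, \ldots, 0, tg_1)$, combined with the bounds from stage (a), pin down $M(n_t) = tg_k + 1$ and $m(n_t) = tg_1 + 1$ (the rounding in $m(n_t) \ge \lceil n_t/g_k \rceil$ is sharp because $0 < g_1/g_k < 1$). Hence $\rho(n_t) = (tg_k + 1)/(tg_1 + 1)$, and a short check confirms these values are pairwise distinct, strictly below $g_k/g_1$, and tend to $g_k/g_1$, so $g_k/g_1$ is a limit point of $R(S)$.

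The substantive step is stage (c), which I expect to be the main obstacle. Here I plan to upgrade the crude bounds of stage (a) to the \emph{linear} estimates
$$M(n) \ge \frac{n}{g_1} - C_1 \qquad \text{and} \qquad m(n) \le \frac{n}{g_k} + C_2,$$
where $C_1, C_2 > 0$ depend only on $S$. These should come from the Apery sets $\Ap(S;g_1) = \{w_0,\ldots,w_{g_1-1}\}$ and $\Ap(S;g_k) = \{w_0',\ldots,w_{g_k-1}'\}$: writing $n = qg_1 + w_i$ with $w_i \equiv n \pmod{g_1}$ and splicing $q$ copies of $g_1$ onto any factorization of $w_i$ produces a factorization of $n$, which (with $C_1 = \max_i w_i/g_1$) yields the first bound; the second is symmetric. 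Combining them gives $\rho(n) \ge (n/g_1 - C_1)/(n/g_k + C_2) \to g_k/g_1$ as $n \to \infty$, so $\rho(n) \le c < g_k/g_1$ forces $n \le N$ for some $N = N(c,S)$. Only finitely many such $n$ exist, so $R(S) \cap [1,c]$ is finite; this rules out any accumulation point strictly below $g_k/g_1$ and completes the proof.
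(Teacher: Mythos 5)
Your proof is correct, and it reaches the conclusion by a genuinely different route from the one the paper relies on. The paper quotes this statement from Chapman--Holden--Moore without reproving it in Section~\ref{s:background}, but it does recover it later (see Remark~\ref{r:elasticities}) as a consequence of the exact eventual recurrences $M(n) = M(n-g_1)+1$ for $n > (g_1-1)g_k$ and $m(n) = m(n-g_k)+1$ for $n > (g_k-1)g_{k-1}$ (Theorems~\ref{t:maxquasi} and~\ref{t:minquasi}), which are proved via a pigeonhole lemma on subset sums (Lemma~\ref{l:subcol}) and which yield the strictly finer statement that $R(S)$ is a finite set together with $g_1g_k$ monotone increasing sequences converging to $g_k/g_1$ (Corollary~\ref{c:elasticityset}). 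Your stage (c) replaces this exact linearity with the softer Ap\'ery-set estimates $M(n) \ge n/g_1 - C_1$ and $m(n) \le n/g_k + C_2$; these are easier to establish and entirely sufficient for the accumulation-point claim, though they do not give the monotone-sequence structure that the paper needs elsewhere (e.g.\ in Example~\ref{e:elastlencomp}). Your stages (a) and (b) --- the sandwich bound $g_1|\aa| \le n \le g_k|\aa|$, attainment at $n = g_1g_k$, and the explicit family $n_t = tg_1g_k + g_1$ with $\rho(n_t) = (tg_k+1)/(tg_1+1)$ strictly increasing to $g_k/g_1$ --- all check out. The only caveat is the degenerate case $k=1$, i.e.\ $S = \NN$, where $R(S) = \{1\}$ has no accumulation point at all; both your argument and the quoted theorem implicitly assume $S \ne \NN$, so that $g_1 < g_k$.
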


\begin{prop}\label{p:arithminmax}
Fix an arithmetical numerical monoid $S = \<a, a + d, \ldots, a + kd\>$ with $\gcd(a,d) = 1$ and $k < a$.  For $n \in S$, we have the following.
\begin{enumerate}[(a)]
\item If $n = x(a + kd) - yd$ for $0 \le y < a + kd$, then $m(n) = x$.  
\item If $n = x'a + y'd$ for $0 \le y' < a$, then $M(n) = x'$.  
\end{enumerate}
\end{prop}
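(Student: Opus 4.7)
The plan is to encode each factorization by its length together with a weighted sum, reducing the claims to an integer optimization on a one-dimensional lattice. For a factorization $\aa = (a_0, \ldots, a_k)$ of $n$, set $L = |\aa| = \sum_i a_i$ and $j = \sum_i i a_i$; then $n = La + jd$ with $L, j \in \NN$ and $j \le kL$. Conversely, any pair $(L, j) \in \NN^2$ with $n = La + jd$ and $j \le kL$ is realized by a factorization: writing $j = qk + r$ with $0 \le r < k$, one can place weights on $a_0$, $a_r$, and $a_k$ appropriately (with a trivial adjustment in the boundary case $r = 0$ or $q = L$). So the proposition reduces to finding the extremal values of $L$ among such pairs.

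Since $\gcd(a, d) = 1$, the full set of integer solutions to $La + jd = n$ is $\{(L_0 - td,\, j_0 + ta) : t \in \ZZ\}$ for any fixed base $(L_0, j_0)$ coming from a chosen factorization. The feasible region is cut out by the two constraints $j \ge 0$ and $j \le kL$, and I identify, for each of the two parts, which constraint is binding at the optimum and check that the other is automatically satisfied.

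For part (b), I maximize $L$ by minimizing $t$. The lower bound on $t$ comes from $j \ge 0$, yielding minimum $t = -\lfloor j_0/a \rfloor$; at this value $j$ equals the unique $y' \in \{0, \ldots, a-1\}$ with $y'd \equiv n \pmod{a}$, and $L = (n - y'd)/a = x'$. The other constraint $j \le kL$ holds here because $y' \le j_0 \le kL_0 \le kx'$ (since $L$ only grew from $L_0$), so $x'$ is achievable and hence $M(n) = x'$.

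For part (a), I minimize $L$ by maximizing $t$. The binding constraint becomes $j \le kL$, equivalently $kL - j \ge 0$; since $kL - j$ decreases by $a + kd$ as $t$ increases by one, the largest feasible $t$ makes $kL - j$ equal to the residue $y \in \{0, \ldots, a+kd-1\}$ of $kL_0 - j_0$ modulo $a + kd$. Substituting $j = kL - y$ into $n = La + jd$ gives $n = L(a + kd) - yd$, identifying $L$ with the $x$ of the statement; the constraint $j \ge 0$ is automatic because $j = j_0 + ta$ only increased from $j_0 \ge 0$. The main obstacle is really just bookkeeping: verifying that the constructive realization of $(L, j)$ by a nonnegative factorization is valid in all boundary cases, and tracking the two constraints $j \ge 0$ and $j \le kL$ symmetrically across the two parts.
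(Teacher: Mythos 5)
Your proof is correct. Note that the paper itself gives no argument for this proposition — it is stated as a consequence of \cite[Theorem~2.2]{setoflengthsets} — so there is no in-paper proof to compare against; what you have written is a complete, self-contained derivation, and it follows the route that is standard for arithmetical numerical monoids (and is essentially the one in the cited source): collapse a factorization $(a_0,\ldots,a_k)$ to the pair $(L,j)=(\sum a_i,\sum i a_i)$, observe that $L\in\mathsf L(n)$ iff there is some $j$ with $La+jd=n$ and $0\le j\le kL$, and then optimize $L$ over the one-parameter family of integer solutions. The two things worth making sure are airtight are exactly the ones you flag: (i) the realization of any admissible $(L,j)$ by a genuine factorization, which works since writing $j=qk+r$ with $0\le r<k$ forces $q\le L-1$ when $r>0$ and $q\le L$ when $r=0$, so $a_0=L-q-1$ (resp.\ $L-q$) is nonnegative; and (ii) that at each extremum the non-binding constraint is still satisfied, which you verify correctly ($j$ decreases while $L$ increases in part (b), and $t\ge 0$ at the optimum in part (a) so $j\ge j_0\ge 0$). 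One tiny point you leave implicit but should state: the representations $n=x'a+y'd$ with $0\le y'<a$ and $n=x(a+kd)-yd$ with $0\le y<a+kd$ are unique because $\gcd(d,a)=\gcd(d,a+kd)=1$, which is what lets you identify the optimal $L$ with the $x'$ and $x$ of the statement.
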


\begin{thm}[{\cite[Theorem~3.2]{setoflengthsets}}]\label{t:setoflengthsets}
Fix two distinct numerical monoids $S = \<a, a + d, \ldots, a + kd\>$ and $S' = \<a', a' + d', \ldots, a' + k'd'\>$ 
for $\gcd(a,d) = \gcd(a',d') = 1$, $1 \le k < a$ and $1 \le k < a'$.  The following statements are equivalent: 
\begin{enumerate}[(a)]
\item $\mathsf L(S) = \mathsf L(S')$, and 
\item $d = d'$, $\frac{a}{k} = \frac{a'}{k'}$, $\gcd(a,k) \ge 2$ and $\gcd(a',k') \ge 2$.  
\end{enumerate}
\end{thm}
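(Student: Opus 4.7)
The plan is to analyze length sets in arithmetical numerical monoids via Proposition~\ref{p:arithminmax}, which gives explicit formulas for both $M(n)$ and $m(n)$. A preliminary observation is that $\mathsf L(n)$ is always an arithmetic progression with common difference $d$ filling out the entire interval $[m(n), M(n)]$: any factorization length $\ell$ satisfies $\ell a + sd = n$ for some $s \ge 0$, which (using $\gcd(a,d)=1$) pins $\ell$ to a single residue class modulo $d$, and a direct construction shows every such $\ell$ in $[n/(a+kd), n/a]$ is realized. Consequently each length set corresponds to a unique pair $(M, j)$ with $j = (M-m)/d$, and $\mathsf L(S)$ is determined by which pairs $(M, j)$ are achievable in $S$.

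Next I would determine the achievable $j$-range for each fixed $M$: writing $n = Ma + Yd$ with $0 \le Y < a$ and also $n = (M-jd)(a+kd) - yd$ with $0 \le y < a+kd$, solving for $y$ in terms of $Y$ and imposing both bounds yields the explicit constraint
\[
j \in \left[\left\lceil \frac{Mk - 2a - kd + 2}{a+kd}\right\rceil,\; \left\lfloor \frac{Mk}{a+kd}\right\rfloor\right] \cap \NN.
\]
Writing $a = g\alpha$, $k = g\kappa$ with $\gcd(\alpha, \kappa) = 1$, the upper bound simplifies to $\lfloor M\kappa/(\alpha+\kappa d)\rfloor$ (independent of $g$), while the lower bound becomes $\lceil L + 2/(g(\alpha+\kappa d))\rceil$, where $L = (M\kappa - 2\alpha - \kappa d)/(\alpha+\kappa d)$. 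Using that $\{L\}$ is a multiple of $1/(\alpha+\kappa d)$, a case analysis shows this lower bound depends on $g$ only through the dichotomy $g=1$ versus $g\ge 2$: for every $g \ge 2$ it equals $\lceil L \rceil$ (or $L+1$ when $L \in \ZZ$), while for $g=1$ it is strictly larger by $1$ exactly when $\{L\} = (\alpha+\kappa d - 1)/(\alpha+\kappa d)$.

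This dichotomy powers both implications. For (b)~$\Rightarrow$~(a): under $d = d'$, $a/k = a'/k'$, and $\gcd(a,k),\gcd(a',k') \ge 2$, the $j$-ranges at every $M$ coincide between $S$ and $S'$, so the collections of realized length sets agree. For (a)~$\Rightarrow$~(b): I recover $d$ as the common difference of any non-singleton length set (e.g.\ the length set of $a(a+kd)$, which contains $\{a, a+d, \ldots, a+kd\}$), and $a/k$ from $\rho(S) = (a+kd)/a$ via Theorem~\ref{t:holdenmoore}. For the gcd condition I argue by contradiction: if $\gcd(a,k)=1$, i.e.\ $g=1$, I select $M$ satisfying $M\kappa \equiv \alpha - 1 \pmod{\alpha + \kappa d}$ --- such $M$ exists since $\gcd(\kappa, \alpha+\kappa d) = \gcd(\kappa, \alpha) = 1$, and may be taken large enough that the target $j$-value lies in $\NN$. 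The associated length set is present in $\mathsf L(S')$ precisely when $\gcd(a', k') \ge 2$ and absent otherwise; the only compatible alternative, $\gcd(a',k') = 1$, combined with $d = d'$ and $a/k = a'/k'$ forces $S = S'$, contradicting distinctness. Hence both gcds must be $\ge 2$.

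The main obstacle is the delicate case analysis in the lower-bound formula: establishing that the $g$-dependent term $2/(g(\alpha+\kappa d))$ interacts with the denominator $\alpha+\kappa d$ of $\{L\}$ to produce exactly one extra obstruction in $j$ precisely when $g=1$, and identifying the critical congruence class $M\kappa \equiv \alpha - 1 \pmod{\alpha + \kappa d}$ as the one at which that obstruction manifests. This identification is facilitated by the substitution $\kappa d \equiv -\alpha \pmod{\alpha + \kappa d}$, which puts the residue of $L$ into a cleaner form amenable to the fractional-part analysis.
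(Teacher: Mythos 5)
This theorem is not proved in the paper you were given: it is quoted verbatim from \cite[Theorem~3.2]{setoflengthsets}, so there is no internal proof to compare your argument against. Judged on its own merits, your reconstruction is sound. The backbone checks out: $\mathsf L(n)$ is a full arithmetic progression of difference $d$, so each length set is encoded by the pair $(M,j)$ with $j=(M-m)/d$; for fixed $M$ the achievable $j$ (obtained from $n=Ma+Yd=(M-jd)(a+kd)-yd$ by letting $Y$ run over its admissible range) form the interval you state; and after writing $a=g\alpha$, $k=g\kappa$ the only $g$-dependence sits in the term $2/(g(\alpha+\kappa d))$ inside the ceiling. Your dichotomy is correct: the lower bound exceeds $\lceil L\rceil$ by one exactly when $g=1$ and $\{L\}=(\alpha+\kappa d-1)/(\alpha+\kappa d)$, which via $\kappa d\equiv-\alpha\pmod{\alpha+\kappa d}$ is the congruence $M\kappa\equiv\alpha-1\pmod{\alpha+\kappa d}$. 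Two details you should make explicit in a full write-up: (i) the condition $0\le Y\le kM$ needed for $Ma+Yd$ to lie in $S$ with maximal length $M$ is precisely the condition $j\ge 0$, which is why a single $\max(0,\cdot)$ correction handles small $M$ and why the $j$-ranges agree at \emph{every} $M$ under hypothesis (b), not just for $M$ large; and (ii) at the critical $M$ one must check that the excluded value $j_0=\lceil L\rceil$ genuinely lies in the range $[0,\lfloor M\kappa/(\alpha+\kappa d)\rfloor]$ of the $g\ge 2$ monoid, which holds once $M$ is large in the prescribed residue class. With those filled in the argument is complete, and it is the same explicit-parametrization strategy that the present paper transposes to elasticities in Theorem~\ref{t:arithelastset}, Lemma~\ref{l:extraelast}, and Theorem~\ref{t:elastlencomp}.
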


\section{Elasticity sets for arithmetical numerical monoids}\label{s:arithmetical}

Remark~\ref{r:length} demonstrates that information is lost when passing from $\mathsf Z(n)$ to $\mathsf L(n)$.  Since only the ratio of $\max \mathsf L(n)$ and $\min \mathsf L(n)$ is retained when passing from $\mathsf L(n)$ to $\rho(n)$, one might expect that further information is lost when passing from the set of length sets $\mathcal L(S)$ to the set of elasticities $R(S)$.  While this is true in general (see Example~\ref{e:elastlencomp}), when $S$ is an arithmetical numerical monoid, $\mathcal L(S)$ can be recovered from $R(S)$.  This is the content of Corollary~\ref{c:elastlencomp}, the main result of this section.  

For an arithmetical numerical monoid $S = \<a, a + d, \ldots, a + kd\>$, Theorem~\ref{t:setoflengthsets} states that the values $d$ and $a/k$ can both be recovered from $\mathcal L(S)$, and that if $\gcd(a,k)  = 1$, then $\mathcal L(S)$ cannot coincide with $\mathcal L(S')$ for any arithmetical numerical monoid $S' \ne S$.  In order to prove Corollary~\ref{c:elastlencomp} we show that each of these results also holds true for the set of elasticities $R(S)$.  

The proof of Corollary~\ref{c:elastlencomp} comes in two steps.  First, Proposition~\ref{p:arithstep} proves that $d$ can be recovered from $R(S)$.  This also implies the value of $a/k$ can be recovered; see Remark~\ref{r:arithstep}.  Second, Theorem~\ref{t:elastlencomp} ensures that if $\gcd(a,k) = 1$, then $R(S)$ does not coincide with $R(S')$ for any arithmetical numerical monoid $S'$.  

\begin{example}\label{e:arithelast}
Figure~\ref{f:arith_elast_plots} plots the elasticities of elements of $S = \<7,12,17,22\>$.  Notice that the graph appears to be a collection of ``slices'', each consisting of several ``rows'' of points with the same elasticity value.  Theorem~\ref{t:arithelastset} uses Proposition~\ref{p:arithminmax} to eliminate much of the redundancy in $R(S)$ by reparametrizing in terms of these slices and rows (Definition~\ref{d:elasttuple}), thereby simplifying many computations in results throughout this section.  See Example~\ref{e:arithelastparams} for a description of these values.  
\end{example}

\begin{figure}
\begin{center}
\includegraphics[width=5.9in]{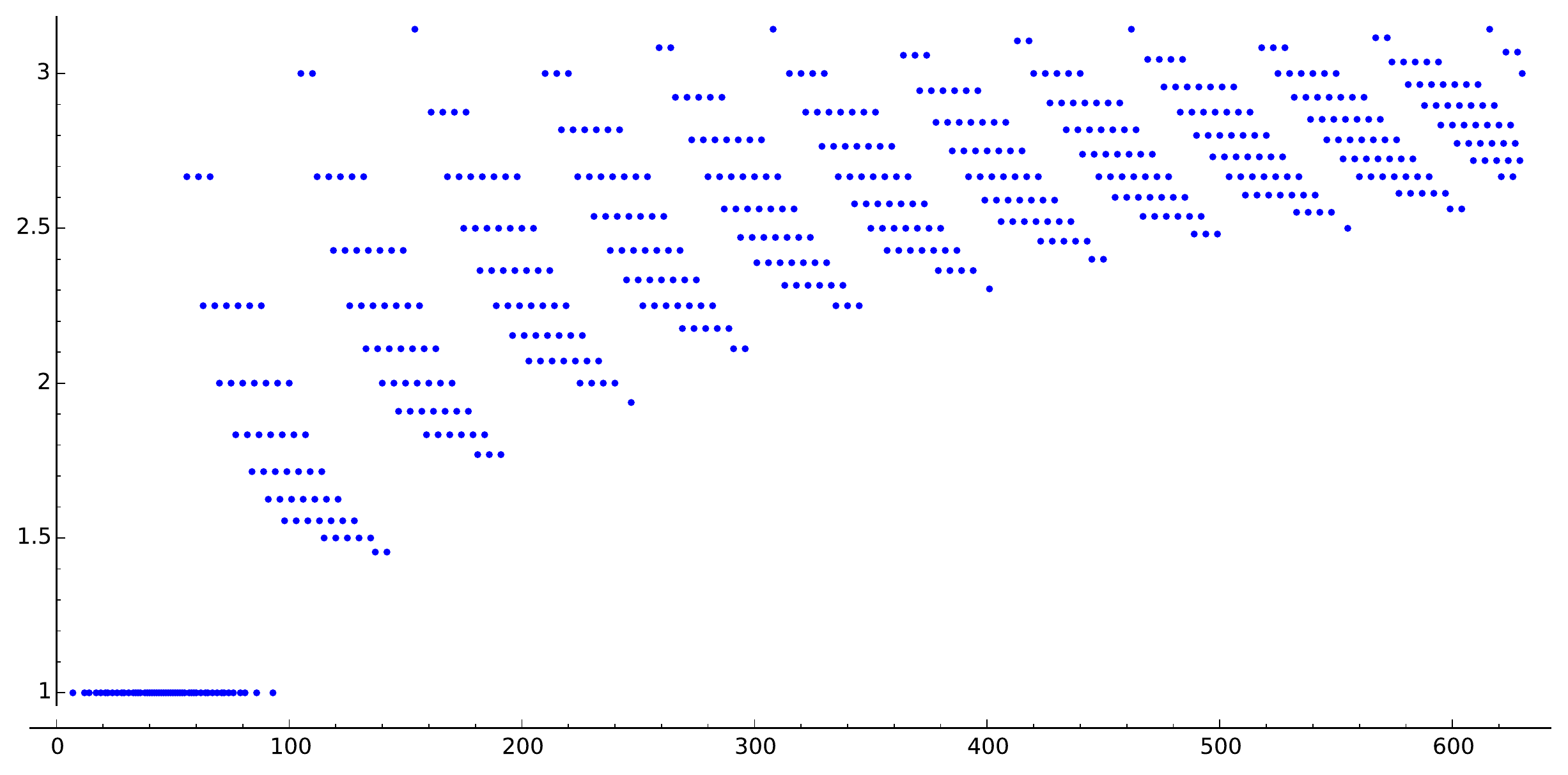}
\end{center}
\caption{Plot of elasticities of $S = \<7,12,17,22\>$.}
\label{f:arith_elast_plots}
\end{figure}

\begin{defn}\label{d:elasttuple}
Fix an arithmetical numerical monoid $S = \<a, a + d, \ldots, a + kd\>$ with $\gcd(a,d) = 1$ and $k < a$.  An element $(c,s,x) \in \ZZ^3$ is an \emph{$S$-elasticity tuple} if $c \ge 0$, $0 \le s < k$, and 
$$\left\lceil \frac{sa}{k} \right\rceil \le x \le \left\lfloor \frac{sa + 2(a - 1)}{k} \right\rfloor + d.$$
The value $ck + s$ is the \emph{slice} of $(c,s,x)$, and $x$ is called the \emph{row} of $(c,s,x)$.  An $S$-elasticity tuple $(c,s,x)$ is \emph{minimal} if $x = \left\lceil \frac{sa}{k} \right\rceil$ and \emph{maximal} if $x = \left\lfloor \frac{sa + 2(a - 1)}{k} \right\rfloor + d$.  Write $\mathcal E(S)$ for the set of $S$-elasticity tuples, and define $\rho_S:\mathcal E(S) \to \QQ$ as 
$$\rho_S(c,s,x) = \frac{c(a + kd) + x + sd}{ca + x}.$$
\end{defn}

\begin{example}\label{e:arithelastparams}
For arithmetical $S = \<a, a + d, \ldots, a + kd\>$, each $S$-elasticity tuple $(c,s,x)$ corresponds to the elasticity $\rho_S(c,s,x)$ occuring in the $(ck + s)$-th slice (where every elasticity in the 0-th slice is 1).  Minimal $S$-elasticity tuples (those with a minimal $x$ value for their slice) correspond to the largest elasticity in the slice, and each successive value of $x$ corresponds to the next row down in the slice.  Maximal $S$-elasticity tuples play a key role in Lemma~\ref{l:extraelast} and Theorem~\ref{t:elastlencomp}; see Example~\ref{e:extraelast}.  

Since the tuple $(c,s,x)$ corresponds to the $(ck + s)$-th slice, it is tempting to use the ordered pair $(ck + s,x)$ in place of $(c,s,x)$ in Definition~\ref{d:elasttuple}.  However, the individual values of $c$ and $s$ are used in nearly every proof in this section.  In particular, the slices whose $S$-elasticity tuples have $s = 0$ are precisely those whose highest elasticity value is $\sup R(S)$.  Indeed, the arithmetical numerical monoid $S$ depicted in Figure~\ref{f:arith_elast_plots} has $k = 3$, and every third slice has $\rho(S)$ as its highest value.  
\end{example}

We now state Theorem~\ref{t:arithelastset}, which ensures that the parametrization given in Definition~\ref{d:elasttuple} produces the correct elasticity set.  

\begin{thm}\label{t:arithelastset}
$R(S) = \rho_S(\mathcal E(S))$ for any arithmetical $S = \<a, a + d, \ldots, a + kd\>$.  
\end{thm}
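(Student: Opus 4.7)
The plan is to prove both containments via a single identity relating the two representations in Proposition~\ref{p:arithminmax}. Given $n \in S$ with $m = m(n)$ and $M = M(n)$, write
\begin{equation*}
n = m(a + kd) - yd = Ma + y''d
\end{equation*}
with $0 \le y < a + kd$ and $0 \le y'' < a$. Equating forces $(M - m)a = (mk - y - y'')d$, and since $\gcd(a, d) = 1$ we get $a \mid (mk - y - y'')$. Setting $t = (mk - y - y'')/a$ yields $M = m + td$ and the key identity $mk = ta + y + y''$, which drives both inclusions.

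For $R(S) \subseteq \rho_S(\mathcal E(S))$, starting from $n$, I would perform Euclidean division $t = ck + s$ with $0 \le s < k$ and set $x = m - ca$. Since $M \ge m$ we have $t \ge 0$, hence $c \ge 0$. The key identity rewrites as $kx - sa = y + y''$, so the ranges $y \in [0, a + kd - 1]$ and $y'' \in [0, a - 1]$ translate (after using integrality of $x$) into precisely $\lceil sa/k \rceil \le x \le \lfloor (sa + 2(a - 1))/k \rfloor + d$, placing $(c, s, x) \in \mathcal E(S)$. A short calculation then gives $\rho_S(c, s, x) = (m + td)/m = \rho(n)$.

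For the reverse inclusion, given $(c, s, x) \in \mathcal E(S)$, set $m = ca + x$ and $t = ck + s$. The bounds on $x$ guarantee $0 \le kx - sa \le 2a + kd - 2$, so I can split $kx - sa = y + y''$ with $0 \le y < a + kd$ and $0 \le y'' < a$. Defining $n = m(a + kd) - yd$, simple algebra also gives $n = (m + td)a + y''d$; once $n \in S$ is known, Proposition~\ref{p:arithminmax} forces $m(n) = m$ and $M(n) = m + td$, yielding $\rho(n) = \rho_S(c, s, x)$. The main obstacle is verifying $n \in S$, which I would handle by writing $y = qk + r$ with $0 \le r < k$ (noting $q \le m$ from $y \le xk \le mk$) and exhibiting a length-$m$ factorization explicitly: $m - q$ copies of $a + kd$ together with $q$ copies of $a$ when $r = 0$, and $m - q - 1$ copies of $a + kd$, one copy of $a + (k - r)d$, and $q$ copies of $a$ when $r > 0$.
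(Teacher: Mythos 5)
Your proof is correct and follows essentially the same route as the paper: both directions rest on the two representations from Proposition~\ref{p:arithminmax}, the division $t = ck+s$ with $x = m - ca$, and the identity $xk = sa + y + y''$ that converts the ranges of $y, y''$ into the defining bounds on $x$. Your version is marginally more self-contained in two spots --- you derive $d \mid M(n) - m(n)$ directly from $\gcd(a,d)=1$ rather than citing an external theorem, and you exhibit an explicit factorization to verify $n \in S$ in the reverse inclusion, a step the paper leaves implicit.
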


\begin{proof}
We begin by showing that for each $n \in S$, the elasticity $\rho(n) = \rho(c,s,x)$ for some $(c,s,x) \in \mathcal E(S)$.  First, write $n = x'a + y'd = x''(a + kd) - y''d$ for $x', x'', y', y'' \ge 0$, $y' < a$, and $y'' < a + kd$.  By Proposition~\ref{p:arithminmax}, $M(n) = x'$ and $m(n) = x''$, and since $x', x'' \in \mathsf L(n)$, we have $d \mid x' - x''$ by~\cite[Theorem~3.9]{delta}.  Fix $c \ge 0$ and $0 \le s < k$ such that $x' - x'' = (ck + s)d$, and let $x = x'' - ca = x' - c(a + kd) - sd$.  Notice that 
$$\begin{array}{rcl}
x(a + kd) - y''d
&=& (x'' - ca)(a + kd) - y''d = n - ca(a + kd) \\
&=& (x' - c(a + kd))a + y'd = (x + sd)a + y'd
\end{array}$$
which implies that $xk = sa + y' + y''$.  Since $y' + y'' \le 2a + kd - 2$, this means $sa \le xk \le sa + 2a + kd - 2$, which yields 
$$\left\lceil \frac{sa}{k} \right\rceil \le x \le \left\lfloor \frac{sa + 2a + kd - 2}{k} \right\rfloor = \left\lfloor \frac{sa + 2(a - 1)}{k} \right\rfloor + d.$$
This means $(c,s,x) \in \mathcal E(S)$ and
$$\rho_S(c,s,x) = \frac{c(a + kd) + sd + x}{ca + x} = \frac{x'}{x''} = \rho_S(n),$$
which proves $R(S) \subset \rho_S(\mathcal E(S))$.  

Conversely, fix $(c,s,x) \in \mathcal E(S)$.  The assumptions on $x$ ensure that 
$$sa \le xk \le sa + 2(a - 1) + kd$$
meaning $0 \le xk - sa \le a + (a + kd) - 2$.  Fix $y', y'' \ge 0$ such that $y' < a$, $y'' < a + kd$, and $y' + y'' = xk - sa$.  Choosing 
$$n = (c(a + kd) + x + sd)a + y'd = (ca + x)(a + kd) - y''d \in S$$
yields $\rho_S(n) = \rho_S(c,s,x)$, meaning $\rho_S(\mathcal E(S)) \subset R(S)$.  
\end{proof}

In the terminology of Example~\ref{e:arithelastparams}, Lemma~\ref{l:elastcomp} states that increasing an $S$-elasticity tuple's slice produces larger elasticities, and increasing its row yields smaller elasticities.  

\begin{lemma}\label{l:elastcomp}
Fix an arithmetical numerical monoid $S = \<a, a + d, \ldots, a + kd\>$.  
\begin{enumerate}[(a)]
\item Any $(c,s,x), (c',s',x) \in \mathcal E(S)$ with $ck + s \le c'k + s'$ satisfy $\rho_S(c,s,x) \le \rho_S(c',s',x)$.  
\item Any $(c,s,x), (c,s,x') \in \mathcal E(S)$ with $x \le x'$ satisfy $\rho_S(c,s,x') \le \rho_S(c,s,x)$.  
\end{enumerate}
\end{lemma}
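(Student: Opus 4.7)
My plan starts from a single algebraic simplification. The identity $c(a+kd)+x+sd = (ca+x)+(ck+s)d$ lets me rewrite
\begin{align*}
\rho_S(c,s,x) \;=\; 1 \,+\, \frac{(ck+s)\,d}{ca+x},
\end{align*}
which cleanly isolates the roles of the slice $ck+s$ and the row $x$ and makes both monotonicity claims tractable.

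\medskip

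\noindent Part (b) is then essentially free. With $c$ and $s$ fixed, the numerator $(ck+s)d$ does not depend on $x$ while the denominator $ca+x$ is strictly increasing in $x$; hence $\rho_S(c,s,\cdot)$ is strictly decreasing, so $x\le x'$ gives $\rho_S(c,s,x')\le \rho_S(c,s,x)$. No further ingredients are needed.

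\medskip

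\noindent For part (a) I would combine the two elasticities over a common denominator and expand. A direct calculation yields
\begin{align*}
\rho_S(c',s',x) - \rho_S(c,s,x) \;=\; \frac{d\bigl[(c'-c)(kx-sa) \,+\, (s'-s)(x+ac)\bigr]}{(c'a+x)(ca+x)}.
\end{align*}
Since the denominator is positive, the claim reduces to showing the bracketed quantity is nonnegative. Two ingredients are available. First, because $0\le s,s'<k$, the hypothesis $ck+s\le c'k+s'$ forces $c'-c\ge 0$. Second, the defining bound $\lceil sa/k\rceil\le x$ built into $\mathcal{E}(S)$ gives $kx\ge sa$. In the subcase $s\le s'$ both summands in the bracket are then nonnegative and the conclusion is immediate.

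\medskip

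\noindent The main obstacle is the subcase $s>s'$ (which forces $c'-c\ge 1$), where the second summand becomes negative and one must show that the first summand compensates. Pushing this through will require combining the slice inequality $(c'-c)k \ge s-s'$ with both lower bounds $kx\ge sa$ and $kx\ge s'a$ coming from membership in $\mathcal{E}(S)$, together with the nonnegativity of $c$ and $x$. This is the technical heart of the proof; once the bracket is shown to be nonnegative in this remaining case, parts (a) and (b) together deliver the lemma.
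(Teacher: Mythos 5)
Your opening reduction $\rho_S(c,s,x) = 1 + \tfrac{(ck+s)d}{ca+x}$ is the right move, part (b) is complete and correct, and your expansion of the difference into the bracket $(c'-c)(kx-sa)+(s'-s)(x+ac)$, together with the disposal of the subcase $s\le s'$ (using $c\le c'$ and $kx\ge sa$), is also correct. But the subcase $s>s'$ that you defer as ``the technical heart'' is a genuine gap, and it cannot be closed: the bracket really can be negative for valid pairs of tuples, so part (a) as stated fails there. Take $S=\langle 7,12,17,22\rangle$ (so $a=7$, $d=5$, $k=3$) and compare $(c,s,x)=(0,2,5)$ with $(c',s',x)=(1,0,5)$. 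Both lie in $\mathcal E(S)$ (for $s=2$ the row range is $5\le x\le 13$; for $s=0$ it is $0\le x\le 9$) and $ck+s=2\le 3=c'k+s'$, yet your bracket equals $1\cdot(15-14)+(-2)\cdot 5=-9$, giving
$$\rho_S(0,2,5)=\tfrac{15}{5}=3 \qquad\text{while}\qquad \rho_S(1,0,5)=\tfrac{27}{12}=\tfrac94.$$
Both values are genuine elasticities of elements of $S$ (they are $\rho(105)$ and $\rho(219)$ respectively), so this is not an artifact of the parametrization: the slices' elasticity ranges genuinely overlap.

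What your argument does establish is the componentwise version of (a): if $c\le c'$ \emph{and} $s\le s'$ (same $x$), then $\rho_S(c,s,x)\le\rho_S(c',s',x)$. For comparison, the paper's own proof is a one-sentence assertion (``compare fractions and observe $1\le\rho_S\le(a+kd)/a$'') that never confronts the case $s>s'$ either; your calculation is the more honest one precisely because it isolates where the claimed inequality breaks. To get a usable monotonicity-in-slice statement one must either add the hypothesis $s\le s'$ or compare only tuples occupying corresponding positions in their slices (e.g.\ both minimal or both maximal, which is how the lemma is mostly invoked later in the paper); do not expect the remaining case of your part (a) to ``push through.''
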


\begin{proof}
The claim follows directly upon comparing fractions and observing that 
$$1 \le \rho_S(c,s,x) = \frac{c(a + kd) + x + sd}{ca + x} \le \frac{a + kd}{a}$$
for every $(c,s,x) \in \mathcal E(S)$.  
\end{proof}

We now use the parametrization of $R(S)$ provided by Theorem~\ref{t:arithelastset} to prove Corollary~\ref{c:elastlencomp}.  We begin with Proposition~\ref{p:arithstep}, which demonstrates how the value of $d$ can be recovered from $R(\<a, a + d, \ldots, a + kd\>)$.  

\begin{prop}\label{p:arithstep}
Fix an arithmetical numerical monoid $S = \<a, a + d, \ldots, a + kd\>$ with $\gcd(a,d) = 1$ and $1 \le k < a$.  We have 
$$d = \frac{(g-1)(f-1)}{g-f}$$
where $1 < f < g$ are the three minimal values in $R(S)$.  
\end{prop}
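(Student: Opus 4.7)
The plan is to use the parametrization of $R(S)$ from Theorem~\ref{t:arithelastset} to identify the three minimal elasticities explicitly and then read off $d$. Setting $M = \lfloor (3a-2)/k \rfloor + d$, I would show that the three minimal values are $1 < 1 + d/M < 1 + d/(M-1)$, so $f = 1 + d/M$ and $g = 1 + d/(M-1)$. Granting this, the formula follows by a direct computation:
\[
g - f = \frac{d}{M(M-1)}, \qquad (f-1)(g-1) = \frac{d^2}{M(M-1)},
\]
whence $(f-1)(g-1)/(g-f) = d$.

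That $1 \in R(S)$ is trivial from slice~$0$, where $\rho_S(0, 0, x) = 1$. To see that $f$ and $g$ are next, I would first examine slice~$1$, i.e.\ the tuples $(c,s,x) \in \mathcal E(S)$ with $ck + s = 1$. For these, $\rho_S(c,s,x) = 1 + d/(ca + x)$, so by Lemma~\ref{l:elastcomp}(b) the smallest elasticities of slice~$1$ come from the largest values of $ca + x$; a direct check (separately for $k = 1$ and $k \ge 2$) shows these largest values are the consecutive integers $M$ and $M - 1$, both realized using $a > k$. The crucial step is then to show that no slice $i \ge 2$ contains an elasticity strictly below $g$. Absorbing $ca$ into the floor exactly as in the proof of Theorem~\ref{t:arithelastset}, the minimum elasticity in slice~$i$ equals $1 + id/D_i$ with $D_i = \lfloor ((i+2)a - 2)/k \rfloor + d$, so the desired inequality becomes
\[
i \lfloor (3a-2)/k \rfloor - \lfloor ((i+2)a - 2)/k \rfloor \;\ge\; i - (i-1)d.
\]
I would deduce this from the stronger claim that the left-hand side is at least $i - 1$, which combines the standard floor bound $i \lfloor y \rfloor \ge \lfloor iy \rfloor - (i-1)$ with the identity
\[
i(3a-2) - ((i+2)a - 2) = 2(i-1)(a-1)
\]
together with the hypothesis $a - 1 \ge k$, forcing $\lfloor 2(i-1)(a-1)/k \rfloor \ge 2(i-1)$. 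The inequality $i - 1 \ge i - (i-1)d$ then holds since $(i-1)d \ge 1$ whenever $i \ge 2$ and $d \ge 1$.

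The hard part is this slice-$i$ estimate for $i \ge 2$, which is essentially tight: equality $\min\{\text{elasticities in slice } 2\} = g$ can occur (for instance in $S = \langle 3, 4, 5 \rangle$, where both equal $4/3$), so any crude floor estimate like $\lfloor y \rfloor \ge y - 1$ would be too weak. What makes the sharp bound go through is the algebraic identity $i(3a-2) - ((i+2)a-2) = 2(i-1)(a-1)$, whose right-hand side is exactly divisible by $k$ in a way aligned with the hypothesis $a \ge k+1$.
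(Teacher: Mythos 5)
Your proof is correct, and it follows the same overall strategy as the paper's: use Theorem~\ref{t:arithelastset} to see that the two smallest elasticities exceeding $1$ are the bottom two rows of slice~$1$, namely $f = 1 + d/M$ and $g = 1 + d/(M-1)$ with $M = \lfloor (3a-2)/k \rfloor + d$, and then verify that no slice $i \ge 2$ dips below $g$. The difference lies in how that last estimate is carried out. The paper splits into the cases $k=1$, $k=2$, and $k>2$, explicitly bounds only the minimum of slice~$2$ in each case, and invokes Lemma~\ref{l:elastcomp} to dispose of the higher slices; you instead absorb $ca$ into the floor to get the single closed form $1 + id/D_i$ with $D_i = \lfloor ((i+2)a-2)/k\rfloor + d$ for the minimum of slice~$i$, and then prove $i(M-1) \ge D_i$ uniformly for all $i \ge 2$ via the identity $i(3a-2) - ((i+2)a-2) = 2(i-1)(a-1)$ together with $a-1 \ge k$. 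I checked your chain of estimates ($i\lfloor y\rfloor \ge \lfloor iy\rfloor - (i-1)$, superadditivity of the floor across the identity, and $\lfloor 2(i-1)(a-1)/k\rfloor \ge 2(i-1)$), and it is indeed sharp enough to handle cases such as $\langle 3,4,5\rangle$, where slice~$2$ attains $g$ exactly. Your uniform treatment buys two things. First, it handles all slices $i \ge 2$ in one stroke, whereas the paper's reduction of higher slices to slice~$2$ via Lemma~\ref{l:elastcomp} is not immediate (the maximal row grows with the slice, so part~(a) of that lemma does not directly compare the minima of different slices). Second, it subsumes $k=1$ into the same formulas: the paper's separate $k=1$ computation takes $g = \rho_S(1,0,2a+d-1)$, whose row exceeds the maximal admissible row $2a+d-2$ and which would sit below $f$ rather than above it, so your identification $g = 1+d/(M-1)$ (i.e., the row $2a+d-3$) is the correct one there.
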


\begin{proof}
First, suppose $k = 1$.  The maximal $S$-elasticity tuple $(1,0,2a + d - 2)$ gives $f = \rho_S(1,0,2a + d - 2)$ by Lemma~\ref{l:elastcomp}.  We claim $g = \rho_S(1,0,2a + d - 1)$.  Fix an $S$-elasticity tuple $(c,0,x)$ with $\rho_S(c,0,x) > f$.  If $c = 1$, then by Lemma~\ref{l:elastcomp}, $\rho_S(c,0,x) \ge \rho_S(c,0,2a + d - 1)$.  If $c \ge 2$, then by Lemma~\ref{l:elastcomp}, $\rho_S(c,0,x)$ is minimal when $c = 2$ and when $(c,0,x)$ is maximal, that is, when $x = 2a + d - 2$.  Notice that 
$$\begin{array}{rcl}
(4a + 3d - 2)(3a + d - 1) &=& (4a + d - 2)(3a + d - 1) + d(6a + 2d) - 2d \\
&\ge& (4a + d - 2)(3a + d - 1) + d(4a + d) - 2d \\
&=& (3a + 2d - 1)(4a + d - 2),
\end{array}$$
which means 
$$\rho_S(2,0,2a + d - 2) = \frac{4a + 3d - 2}{4a + d - 2} \ge \frac{3a + 2d - 1}{3a + d - 1} = \rho_S(1,0,2a + d - 1).$$
Subsituting these values for $f$ and $g$ gives 
$$\frac{(g-1)(f-1)}{g-f} = \frac{d^2}{(3a + d - 1)(3a + d - 2)} \cdot \frac{(3a + d - 1)(3a + d - 2)}{d(3a + d - 1) - d(3a + d - 2)} = d,$$
as desired.  

Now, suppose $k \ge 2$, and let $B = \lfloor (3a - 2)/k \rfloor + d$.  We will show that $f = (B + d)/B$ and $g = (B - 1 + d)/(B - 1)$, 
from which the claim follows directly.  Indeed, solving the first equality for $B$ yields $B = d/(f-1)$, and substituting into the second yields 
$$\begin{array}{rcl}
g &=& (B - 1 + d)/(B - 1) \\
&=& (df - (f-1))/(d - (f-1)).
\end{array}$$
Clearing the denominator on the right hand side yields 
$$gd - fd = (g-1)(f-1),$$
and dividing by $g-f$ yields the desired equality.  

By Theorem~\ref{t:arithelastset}, $f = \rho_S(c,s,x)$ for some $S$-elasticity tuple $(c,s,x)$.  
By Lemma~\ref{l:elastcomp}, $(c,s,x)$ is maximal, and since $f > 1$, we have $c = 0$ and $s = 1$.  
This gives the desired form for $f$.  It remains to prove that $g = (B - 1 + d)/(B - 1)$.  

Fix a $S$-elasticity tuple $(c,s,x)$ and let $g' = \rho_S(c,s,x)$.  By Lemma~\ref{l:elastcomp}, it suffices to assume $(c,s,x)$ is maximal.  
If $c = s = 0$, then $g' = 1$, and if $c = 0$ and $s = 1$, then $g' = g$.  First, suppose $k = 2$.  By Lemma~\ref{l:elastcomp}, we can assume 
$c = 1$ and $s = 0$, meaning $x = \lfloor (2a - 2)/2 \rfloor + d = a + d - 1$.  Notice that
$$2B = 2 \lfloor 3a/2 \rfloor + 2d - 2 \ge 3a + 2d - 2 \ge 2a + 2d + 1 \ge 2a + d + 1.$$
Manipulating the above inequality yields 
$$(2a + 3d - 1)(B - 1) \ge (2a + d - 1)(B + d - 1),$$
which gives
$$g' = \frac{c(a + kd) + x + sd}{ca + x} = \frac{2a + 3d - 1}{2a + d - 1} \ge \frac{B + d - 1}{B - 1} = g.$$
Now, suppose $k > 2$.  By Lemma~\ref{l:elastcomp}, it suffices to assume $c = 0$ and $s = 2$, 
and maximality of $x$ gives $x = \lfloor (4a - 2)/k \rfloor + d$.  Notice that
$$\begin{array}{rcl}
2B - d - 2 &=&  2\lfloor (3a - 2)/k \rfloor + d - 2 \ge 2 \lfloor (3a - 2)/k \rfloor - 1 \ge \lfloor (6a - 4)/k \rfloor - 2 \\
&\ge& \lfloor (4a - 2)/k \rfloor + 2 \lfloor (a-1)/k \rfloor - 2 \ge \lfloor (4a - 2)/k \rfloor = x - d.
\end{array}$$
Manipulating the inequality above yields 
$$(x + 2d)(B - 1) \ge x(B - 1 + d)$$
which gives
$$\frac{x + 2d}{x} \ge \frac{B - 1 + d}{B - 1} = g.$$
This completes the proof.  
\end{proof}

\begin{remark}\label{r:arithstep}
Fix an arithmetical numerical monoid $S = \<a, a + d, \ldots, a + kd\>$.  Since $\sup R(S) = (a + kd)/a = 1 + d(k/a)$ by Theorem~\ref{t:holdenmoore}, Proposition~\ref{p:arithstep} also implies that we can recover $a/k$ from $R(S)$.  
\end{remark}

\begin{example}\label{e:extraelast}
The majority of the proof of Theorem~\ref{t:elastlencomp} considers two arithmetical numerical monoids $S = \<a, a + d, \ldots, a + kd\>$ and $S' = \<a', a' + d', \ldots, a' + k'd'\>$ satisfying $d = d'$, $\frac{a}{k} = \frac{a'}{k'}$, $\gcd(a',k') = 1$, and $\gcd(a,k) \ge 2$.  In this case, the sets $R(S')$ and $R(S)$ are nearly identical, as are the elasticities achieved within their respective ``slices''.  Figure~\ref{f:arith_missing_elast} plots the elasticities of $S = \<14, 17, \ldots, 32\>$ and $S' = \<7, 10, 13, 16\>$, and the red points mark the (sparse) elasticities in $R(S) \setminus R(S')$.  In general, every elasticity that lies in $R(S) \setminus R(S')$ is achieved by a maximal $S$-elasticity tuple.  Lemma~\ref{l:extraelast} produces an $(c,s,x) \in \mathcal E(S)$ such that $\rho_S(c,s,x) \notin R(S')$, and the proof of Theorem~\ref{t:elastlencomp} verifies that this is the case.  
\end{example}

\begin{figure}
\begin{center}
\includegraphics[width=2.9in]{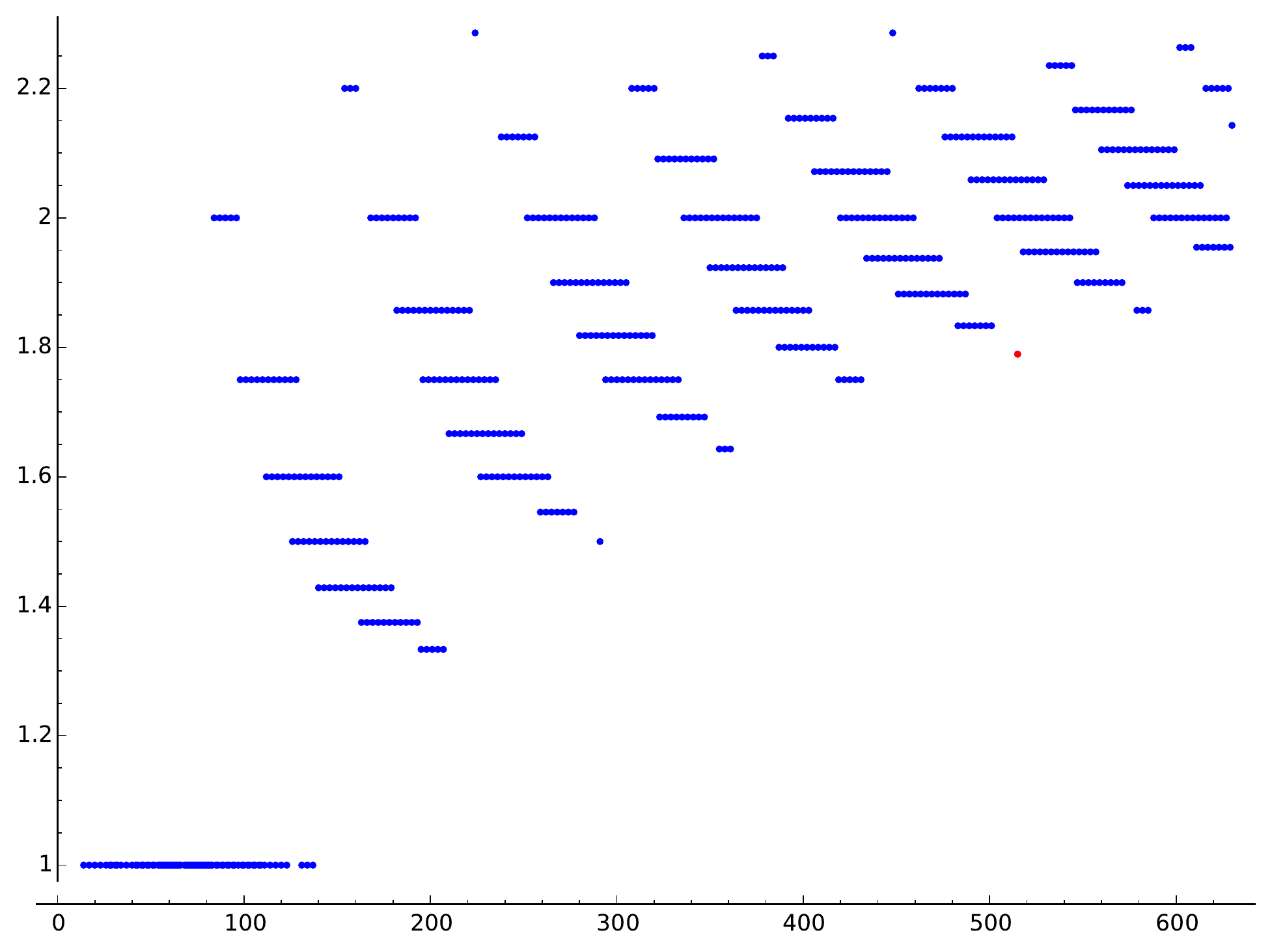}
\hspace{0.2in}
\includegraphics[width=2.9in]{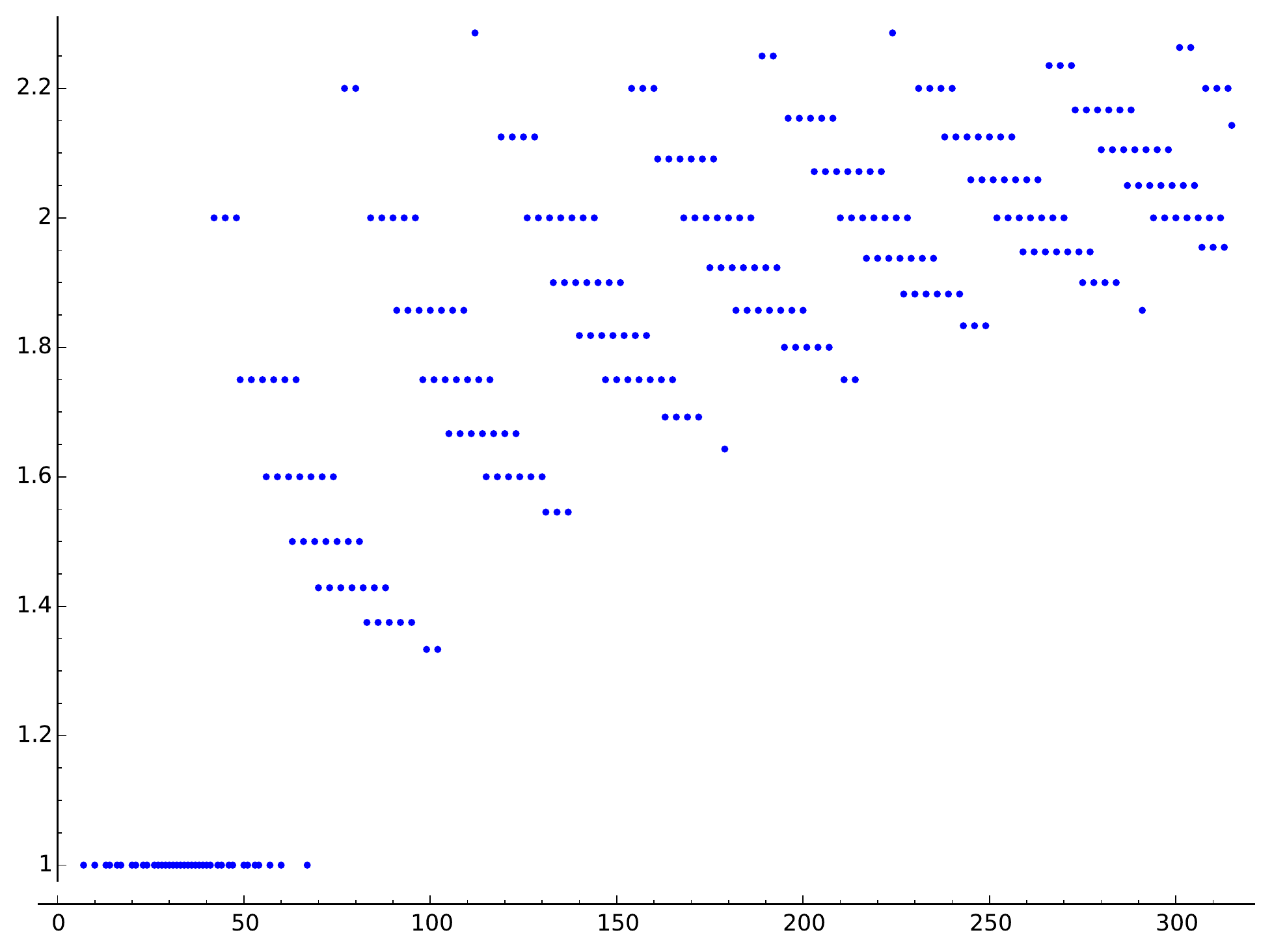}
\end{center}
\caption{Plot of elasticities of $S = \<14,17,20,23,26,29,32\>$ (left) and $S' = \<7, 10, 13, 16\>$ (right).}
\label{f:arith_missing_elast}
\end{figure}

\begin{lemma}\label{l:extraelast}
Fix an arithemtical numerical monoid $S = \<a, a + d, \ldots, a + kd\>$ with $\gcd(a,d) = 1$ and $1 \le k < a$, and suppose $\gcd(a,k) \ge 2$.  
Write $g = \gcd(a,k)$, $k' = k/g$, and $a' = a/g$.  There exists a maximal $S$-elasticity tuple $(c,s,x)$ such that
\begin{enumerate}[(a)]
\item $a'(s+2) \equiv 1 \bmod k'$, and
\item $\gcd(ca + x, ck + s) = 1$.  
\end{enumerate}
\end{lemma}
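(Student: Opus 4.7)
The plan is to produce the desired tuple in two stages: first choose the pair $(s,x)$, with $x$ determined by maximality once $s$ is fixed, and then choose $c$ by a CRT argument.

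Since $\gcd(a',k')=1$, condition (a) pins $s$ down modulo $k'$, yielding exactly $g$ admissible values $s_j = s_0 + jk'$ in $\{0,\ldots,k-1\}$ for some fixed $s_0 \in \{0,\ldots,k'-1\}$ and $j=0,\ldots,g-1$. Multiplying (a) by $g$ gives $a(s+2)\equiv g\pmod k$, and a short calculation simplifies the maximal row formula to $x_j=(a'(s_j+2)-1)/k'+d=x_0+a' j$, where $x_0=(a'(s_0+2)-1)/k'+d$. Another short calculation yields the key identity $kx_j-as_j=2a-g+kd$, which I denote $D$ and which is manifestly independent of $j$. Since $k(ca+x)-a(ck+s)=kx-as$, every common divisor of $ca+x$ and $ck+s$ divides $D$, so (b) reduces to ensuring no prime $p\mid D$ divides both $ca+x$ and $ck+s$ simultaneously.

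I split the primes $p\mid D$ into two classes. If $p\mid g$ then $p\mid a$ and $p\mid k$, so $ca+x\equiv x\pmod p$ and $ck+s\equiv s\pmod p$ independently of $c$; the bad condition collapses to the $c$-free statement ``$p\mid x_j$ and $p\mid s_j$''. If $p\nmid g$ then $p\nmid a$ or $p\nmid k$, and a routine case analysis shows that for fixed $(s_j,x_j)$ the bad $c$'s form at most one residue class modulo $p$. The crux of the argument is therefore to choose $j$ so that no prime $p\mid g$ has both $p\mid s_j$ and $p\mid x_j$. With $s_j=s_0+jk'$ and $x_j=x_0+a' j$, each of the divisibilities $p\mid s_j$ and $p\mid x_j$ either defines a unique residue class of $j\bmod p$ or is constant in $j$ (the latter iff $p\mid k'$ or $p\mid a'$, respectively). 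The catastrophic case in which every $j\bmod p$ is bad would require $p\mid k'$ and $p\mid a'$ simultaneously, which is excluded by $\gcd(a',k')=1$. Hence for each $p\mid g$ the bad $j$'s form at most one residue class mod $p$, and a CRT count produces at least $\phi(g)\ge 1$ good $j$ in $\{0,\ldots,g-1\}$.

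Finally, fixing such a $j$, so that $s:=s_j$ and $x:=x_j$ avoid the $c$-independent obstruction from every prime $p\mid g$, I apply CRT once more over the primes $p\mid D$ with $p\nmid g$: each excludes at most one residue class of $c\bmod p$, so infinitely many non-negative integers $c$ avoid all of them at once and produce the desired maximal $S$-elasticity tuple $(c,s,x)$ satisfying (a) and (b). The main obstacle is the third step: ruling out the one pathological scenario in which every admissible $j$ would fail (b) via some prime $p\mid g$, which is precisely what the hypothesis $\gcd(a',k')=1$ prevents.
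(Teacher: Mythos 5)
Your argument is correct, but it takes a genuinely different route from the paper's. Both proofs start the same way: the unique $s_0\in[0,k')$ with $a'(s_0+2)\equiv 1\bmod k'$ forces, for any $s\equiv s_0\bmod k'$, the maximal row to be $x=((s+2)a'-1)/k'+d$ (the floor collapses because $g\ge 2$). From there the paper gives a compact explicit construction: using a B\'ezout relation $pa'+qk'=1$ it manufactures an integer $m>0$ with $(p+bk')(ma'+x)+(q-ba')(mk'+s)=1$ for a suitable shift $b$, hence $\gcd(ma'+x,mk'+s)=1$, and then writes $m=cg+r$ so that the maximal tuple $(c,s+rk',x+ra')$ has $ca+(x+ra')=ma'+x$ and $ck+(s+rk')=mk'+s$, i.e.\ condition (b) holds by fiat. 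You instead run a sieve: the identity $k(ca+x)-a(ck+s)=kx-as=2a-g+kd=:D$, which is constant over all maximal tuples satisfying (a), confines any common prime divisor to the finitely many primes dividing $D>0$; primes $p\mid g$ impose a $c$-free obstruction killed by choosing the right translate $s_j=s_0+jk'$ (at most one bad class of $j$ mod each such $p$, since $\gcd(a',k')=1$ rules out both divisibilities being constant in $j$, leaving at least $\phi(g)\ge 1$ good choices), and primes $p\nmid g$ exclude at most one class of $c$ mod $p$, so CRT finishes. I checked the key computations ($x_j=x_0+ja'$, $kx_j-as_j=2a-g+kd$, and the residue-class counts) and they are sound; the only cosmetic caveat is the degenerate case $ck+s=0$, which your sieve excludes automatically (or which you can dodge by taking $c$ large). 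The trade-off: the paper's B\'ezout trick is shorter and fully explicit, while your invariant $D$ and two-stage CRT argument is more conceptual, makes visible exactly where $\gcd(a',k')=1$ is used, and even yields a count of how many tuples in each congruence class work.
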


\begin{proof}
Let $s$ denote the integer satisfying $0 \le s < k'$ and $a'(s+2) \equiv 1 \bmod k'$, and let
$$x = \left\lfloor \frac{(s+2)a - 2}{k} \right\rfloor + d = \left\lfloor \frac{(s+2)a' - (2/g)}{k'} \right\rfloor + d = \frac{(s+2)a' - 1}{k'} + d$$
denote the value such that $(0,s,x)$ is a maximal $S$-elasticity tuple.  Since $\gcd(a',k') = 1$, there exist integers $p$ and $q$ such that $pa' + qk' = 1$.  
Notice that 
$$xk' = (s+2)a' - 1 + k'd > sa',$$
so fix $b \in \ZZ$ such that $b(sa' - xk') > px + qs$.  For $m = 1 - (p + bk')x - (q - ba')s > 0$, we have 
$$(p + bk')(ma' + x) + (q - ba')(bk' + s) = m + (p + bk')x + (q - ba')s = 1,$$
meaning $\gcd(ma' + x, mk' + s) = 1$.  Write $m = cg + r$ for $0 \le r < g$.  We see that the $S$-elasticity tuple $(c, s + rk', x + ra')$ is maximal since 
$$x + ra' = \frac{(s+2)a' - 1}{k'} + d + ra' = \frac{(s+rk'+2)a' - 1}{k'} + d,$$
and $\gcd(ca + (x + ra'), ck + (s + rk')) = \gcd(ma' + x, mk' + s) = 1$, as desired.  
\end{proof}

\begin{thm}\label{t:elastlencomp}
If $S = \<a, a + d, \ldots, a + kd\>$ and $S' = \<a', a' + d', \ldots, a' + k'd'\>$ are distinct arithmetical numerical monoids, then $R(S) = R(S')$ if and only if 
$d = d'$, $\frac{a}{k} = \frac{a'}{k'}$, $\gcd(a,k) \ge 2$ and $\gcd(a',k') \ge 2$.  
\end{thm}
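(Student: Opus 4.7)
The plan is to prove both implications, leveraging the parametrization from Theorem~\ref{t:arithelastset}, the recovery results for $d$ and $a/k$ from $R(S)$, and the construction in Lemma~\ref{l:extraelast}. For the backward direction I would simply invoke Theorem~\ref{t:setoflengthsets}: the listed hypotheses are exactly those guaranteeing $\mathcal L(S) = \mathcal L(S')$, and since $\rho(n) = \max \mathsf L(n)/\min \mathsf L(n)$ the map $L \mapsto \max L/\min L$ recovers $R(S)$ from $\mathcal L(S)$, so $R(S) = R(S')$ follows immediately.

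For the forward direction, assume $R(S) = R(S')$ with $S \ne S'$. Proposition~\ref{p:arithstep} yields $d = d'$ and Remark~\ref{r:arithstep} yields $a/k = a'/k'$, so the only remaining claim is $\gcd(a,k),\gcd(a',k') \ge 2$. If both gcds equaled $1$, the common fraction $a/k = a'/k'$ would already be in lowest terms, forcing $(a,k) = (a',k')$ and hence $S = S'$. Thus it suffices to rule out the asymmetric case $\gcd(a,k) \ge 2$ and $\gcd(a',k') = 1$; the opposite case then follows by swapping the roles of $S$ and $S'$.

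In this asymmetric case I would set $g = \gcd(a,k)$, $a' = a/g$, $k' = k/g$, and apply Lemma~\ref{l:extraelast} to obtain a maximal $S$-elasticity tuple $(c,s,x)$ satisfying $a'(s+2) \equiv 1 \pmod{k'}$ and $\gcd(ca+x,\,ck+s) = 1$. The goal is then to show $\rho_S(c,s,x) \notin R(S')$, contradicting $R(S) = R(S')$. Supposing $\rho_{S'}(c',s',x') = \rho_S(c,s,x)$ for some $(c',s',x') \in \mathcal E(S')$, writing each elasticity as $1 + (ck+s)d/(ca+x)$ (and its $S'$-analogue, using $d = d'$) and cross-multiplying gives $(c'k'+s')(ca+x) = (ck+s)(c'a'+x')$. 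The coprimality supplied by the lemma then forces
\[
c'k' + s' = t(ck + s), \qquad c'a' + x' = t(ca + x)
\]
for some positive integer $t$. Forming $k'(c'a'+x') - a'(c'k'+s')$ and using the identity $ak' = a'k$ arising from $a = ga'$, $k = gk'$ triggers a large cancellation that collapses the cross-difference to $x'k' - s'a' = t(xk' - sa')$.

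The heart of the contradiction is the identity $xk' - sa' = 2a' - 1 + k'd$, which I would derive by unpacking the maximal formula $x = \lfloor(sa + 2(a-1))/k\rfloor + d$ under the substitutions $a = ga'$, $k = gk'$ and evaluating the resulting floor using the congruence $a'(s+2) \equiv 1 \pmod{k'}$ together with $g \ge 2$. On the other hand, the upper bound in Definition~\ref{d:elasttuple} forces $x'k' - s'a' \le 2a' - 2 + k'd$ unconditionally for any $S'$-elasticity tuple, so $t(2a' - 1 + k'd) \le 2a' - 2 + k'd$, which is incompatible with $t \ge 1$ since $2a' - 1 + k'd > 0$. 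I expect the main obstacle to be the clean verification of $xk' - sa' = 2a' - 1 + k'd$, as this is where the precise congruence supplied by Lemma~\ref{l:extraelast} is essential; once that identity is in hand, the rest of the argument reduces to a one-step numerical comparison.
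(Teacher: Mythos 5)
Your proposal is correct, and it follows the paper's skeleton exactly up to the last step: the backward direction via Theorem~\ref{t:setoflengthsets}, the recovery of $d$ and $a/k$ via Proposition~\ref{p:arithstep} and Remark~\ref{r:arithstep}, the reduction to the asymmetric case $\gcd(a,k)\ge 2$, $\gcd(a',k')=1$, and the invocation of Lemma~\ref{l:extraelast} all match the paper. Where you diverge is in showing $\rho_S(c,s,x)\notin R(S')$. The paper builds an elasticity-preserving map $\phi:\mathcal E(S')\to\mathcal E(S)$, checks via a floor computation that the special tuple lies outside $\image(\phi)$, and then separately argues (larger slices via Lemma~\ref{l:elastcomp}, smaller slices via a non-integrality argument using $\gcd(ca+x,ck+s)=1$) that no \emph{other} tuple of $\mathcal E(S)$ achieves the same elasticity, so the value cannot come from $R(S')$. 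You instead work directly in $\mathcal E(S')$: coprimality forces the proportionality $c'k'+s'=t(ck+s)$, $c'a'+x'=t(ca+x)$, the identity $ak'=a'k$ collapses the cross-difference to $x'k'-s'a'=t(xk'-sa')$, and the congruence $a'(s+2)\equiv 1\bmod k'$ (with $g\ge 2$) pins $xk'-sa'=2a'-1+k'd$, one more than the ceiling $2a'-2+k'd$ that Definition~\ref{d:elasttuple} imposes on any $S'$-tuple — an immediate contradiction for $t\ge 1$. Your route is somewhat more economical: it dispenses with $\phi$ and with the case analysis over slices of $\mathcal E(S)$, and it makes transparent that the congruence in Lemma~\ref{l:extraelast} is precisely engineered to push the row index one unit past what $S'$ can accommodate; the paper's route has the side benefit of exhibiting the containment $R(S')\subseteq R(S)$ explicitly, which your argument does not need and does not produce. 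Both arguments rest on the same two outputs of Lemma~\ref{l:extraelast}, so the difference is one of execution rather than of strategy.
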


\begin{proof}
If the given conditions are satisfied, then Theorem~\ref{t:setoflengthsets} implies $\mathcal L(S) = \mathcal L(S')$ and thus $R(S) = R(S')$.  Conversely, if $R(S) = R(S')$, then $d = d'$ by Proposition~\ref{p:arithstep} and $\frac{a}{k} = \frac{a'}{k'}$ by Remark~\ref{r:arithstep}.  To complete the proof, it suffices to show that if $d = d'$, $\frac{a}{k} = \frac{a'}{k'}$, $\gcd(a',k') = 1$ and $\gcd(a,k) \ge 2$, then $R(S) \supsetneq R(S')$.  

Since $\frac{a}{k} = \frac{a'}{k'}$, we have $a = ga'$ and $k = gk'$ for $g = \gcd(a,k)$.  Define a map $\phi:\mathcal E(S') \to \mathcal E(S)$ given by $(c',s',x') \mapsto (q,s' + rk',x' + ra')$, where $c' = qg + r$ for $0 \le r < g$.  Notice that $(q,s' + rk',x' + ra') \in \mathcal E(S)$ since $0 \le s' + rk' < k' + (g-1)k' = k$, 
\begin{equation}\label{eq:bounds1}
\left\lceil \frac{(s' + rk')a}{k} \right\rceil = \left\lceil \frac{s'a'}{k'} \right\rceil + ra' \le x' + ra',
\end{equation}
and 
\begin{equation}\label{eq:bounds2}
\begin{array}{rcl}
x' + ra' &\le& \left\lfloor \frac{(s' + 2)a' - 2)}{k'} \right\rfloor + d + ra' = \left\lfloor \frac{(s' + rk' + 2)a - 2g)}{k} \right\rfloor + d \\
&\le& \! \left\lfloor \frac{(s' + rk' + 2)a - 2}{k} \right\rfloor + d.
\end{array}
\end{equation}
We also have $\rho_{S'}(c',s',x') = \rho_S(q,s' + rk',x' + ra')$, so $\phi$ preserves elasticity values.  

Now, by Lemma~\ref{l:extraelast}, there exists a maximal $S$-elasticity tuple $(c,s,x)$ satisfying $a'(s+2) \equiv 1 \bmod k'$ and $\gcd(ca + x, ck + s) = 1$.  If $(c,s,x) \in \image(\phi)$, then it is the image of $(cg + r,s',x - ra') \in \mathcal E(S')$.  In particular, since
$$\begin{array}{rcl}
x - ra' &=& \left\lfloor \frac{(s + 2)a - 2}{k} \right\rfloor + d - ra' = \left\lfloor \frac{(s + 2)a' - (2/g)}{k'} \right\rfloor + d - ra' \\
&=& \! 1 + \left\lfloor \frac{(s + 2)a' - 2}{k'} \right\rfloor + d - ra' = 1 + \left\lfloor \frac{(s'+2)a' - 2}{k'} \right\rfloor + d,
\end{array}$$
we must have $(c,s,x) \notin \image(\phi)$.  Moreover, for $(c_0, s_0, x_0) \in \mathcal E(S)$, if $c_0k + s_0 > ck + s$, then $\rho_S(c,s,x) < \rho_S(c_0,s_0,x_0)$ by Lemma~\ref{l:elastcomp},  and if $c_0k + s_0 < ck + s$, then
$$c_0a + x_0 \ne \frac{(ca + x)(c_0k + s_0)}{ck + s}$$
as the right hand side is not an integer.  This means 
$$\rho_S(c,s,x) = \frac{ca + x + d(ck + s)}{ca + x} \ne \frac{c_0a + x_0 + d(c_0k + s_0)}{c_0a + x_0} = \rho_S(c_0,s_0,x_0).$$
We conclude that the elasticity $\rho_S(c,s,x) \in R(S)$ is only achieved by $(c,s,x)$, which implies $\rho_S(c,s,x) \notin R(S')$ and completes the proof.  
\end{proof}

\begin{cor}\label{c:elastlencomp}
Two arithmetical numerical monoids $S$ and $S'$ satisfy $\mathcal L(S) = \mathcal L(S')$ if and only if $R(S) = R(S')$.  
\end{cor}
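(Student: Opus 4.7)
The plan is to deduce Corollary~\ref{c:elastlencomp} formally from the two structural results already established in this section, namely Theorem~\ref{t:setoflengthsets} and Theorem~\ref{t:elastlencomp}. Each of these results gives an explicit characterization of when two distinct arithmetical numerical monoids produce the same invariant (either $\mathcal L$ or $R$), and, crucially, both characterizations are phrased in terms of the \emph{same} conditions on the parameters $a,k,d$ and $a',k',d'$. This is no accident: the substantive content of this section is precisely that the arithmetic conditions controlling $\mathcal L(S)=\mathcal L(S')$ also control $R(S)=R(S')$.

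The argument I would write then proceeds by splitting on whether $S$ and $S'$ coincide. If $S = S'$, both $\mathcal L(S) = \mathcal L(S')$ and $R(S) = R(S')$ hold trivially, so the biconditional is immediate. If $S \ne S'$, Theorem~\ref{t:setoflengthsets} tells us that $\mathcal L(S) = \mathcal L(S')$ is equivalent to the conjunction
\[
d = d', \qquad \tfrac{a}{k} = \tfrac{a'}{k'}, \qquad \gcd(a,k) \ge 2, \qquad \gcd(a',k') \ge 2,
\]
and Theorem~\ref{t:elastlencomp} tells us that $R(S) = R(S')$ is equivalent to exactly the same conjunction. Chaining these two equivalences yields $\mathcal L(S) = \mathcal L(S') \iff R(S) = R(S')$, which is the claim of the corollary.

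I do not expect any genuine obstacle at the level of the corollary itself: all of the real work has been absorbed into Theorem~\ref{t:elastlencomp} and its supporting ingredients (Proposition~\ref{p:arithstep}, Remark~\ref{r:arithstep}, and Lemma~\ref{l:extraelast}), together with the prior Theorem~\ref{t:setoflengthsets} cited from the literature. The only point requiring a small amount of care is that Theorems~\ref{t:setoflengthsets} and \ref{t:elastlencomp} are stated with the hypothesis that $S$ and $S'$ are distinct, so I would handle the case $S = S'$ as a separate, trivial observation at the start of the proof to make the chain of equivalences airtight. The result is essentially a one-line corollary once the two characterization theorems are in hand.
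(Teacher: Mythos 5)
Your proposal is correct and matches the paper's intended argument: the corollary is stated without a separate proof precisely because it follows by chaining the identical parameter conditions in Theorem~\ref{t:setoflengthsets} and Theorem~\ref{t:elastlencomp}, with the case $S = S'$ being trivial. Your explicit handling of the distinctness hypothesis is a reasonable bit of extra care but introduces nothing beyond what the paper implicitly relies on.
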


We conclude this section with Example~\ref{e:elastlencomp}, which shows the ``arithmetical'' hypothesis in Corollary~\ref{c:elastlencomp} cannot be omitted. 

\begin{example}\label{e:elastlencomp}
Corollary~\ref{c:elastlencomp} shows that for an arithmetical numerical monoid $S$, computation of the elasticity set $R(S)$ (which is given in Theorem~\ref{t:arithelastset}) is just as useful as computing the entire set of length sets $\mathcal L(S)$.  This need not be true in general.  Let $S =  \<6, 10, 13, 14\>$ and $S' =  \<6, 11, 13, 14\>$.  A simple computation shows that 
$$\{4,6\} \in \mathcal L(S) \setminus \mathcal L(S') \text{ and } \rho_S(S \cap [1, 266]) = \rho_{S'}(S' \cap [1, 266]),$$
after which Theorems~\ref{t:maxquasi} and ~\ref{t:minquasi} guarantee that $R(S) = R(S')$.  
\end{example}

\begin{remark}\label{r:elastlencomp}
It remains an interesting question to characterize which numerical monoids $S$ and $S'$ satisfty $R(S) = R(S)$ and $\mathcal L(S) \ne \mathcal L(S')$.  Investigating this phenomenon for general numerical monoids -- or even for specific classes such as those with three minimal generators -- would be of much interest.  
\end{remark}

\section{The Set of Elasticities for General Numerical Monoids}\label{s:quasi}

While Theorem~\ref{t:holdenmoore} provides a concise description of the maximal elasticity attained in a numerical monoid $S$ and a coarse topological property of the set of elasticities of $S$, it does not give a full description of $R(S)$.  In this section, we provide such a description by showing that the functions $M(n)$ and $m(n)$ enjoy a powerful linearity~property.  

We begin with a combinatorial lemma.  

\begin{lemma}\label{l:subcol}
Let $k \ge 0$, and fix $c_1, c_2, \ldots, c_r \in \ZZ$ with $r \ge k$.  There exists $T \subsetneq \{1, \ldots, r\}$ 
satisfying $\sum_{i \in T} c_i \equiv \sum_{i = 1}^r c_i \bmod k$.  
\end{lemma}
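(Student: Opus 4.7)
The plan is a standard pigeonhole argument on partial sums. The key observation is that the conclusion can be rephrased by taking complements: writing $U = \{1,\ldots,r\}\setminus T$, asking for a proper subset $T$ with $\sum_{i\in T} c_i \equiv \sum_{i=1}^r c_i \pmod k$ is equivalent to asking for a nonempty subset $U \subseteq \{1,\ldots,r\}$ with $\sum_{i\in U} c_i \equiv 0 \pmod k$. The lemma therefore reduces to the textbook fact that any $r \ge k$ integers contain a nonempty consecutive subsum divisible by $k$.

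To produce such a $U$, I would introduce the partial sums $s_0 = 0$ and $s_j = c_1 + \cdots + c_j$ for $1 \le j \le r$. This yields $r+1 \ge k+1$ integers, so by pigeonhole two of them, say $s_i$ and $s_j$ with $0 \le i < j \le r$, fall in the same residue class modulo $k$. Then $U = \{i+1, \ldots, j\}$ is nonempty and satisfies $\sum_{\ell \in U} c_\ell = s_j - s_i \equiv 0 \pmod k$, so taking $T = \{1,\ldots,r\}\setminus U$ yields the proper subset promised by the lemma, with $\sum_{\ell \in T} c_\ell = s_r - \sum_{\ell \in U} c_\ell \equiv s_r \pmod k$.

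This argument is essentially obstruction-free — it is a one-step pigeonhole — and the only bookkeeping point is guaranteeing that $T$ is proper, which follows from the strict inequality $i < j$ furnished by the pigeonhole conclusion (equivalently, the nonemptiness of $U$). The edge case $k = 0$ is harmless under the usual convention that congruence modulo $0$ is equality, in which case the hypothesis $r \ge k$ imposes no constraint and the reformulation in terms of zero-sum subsets can be read off directly.
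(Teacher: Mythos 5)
Your proof is correct and is essentially identical to the paper's: both introduce the partial sums $s_0,\ldots,s_r$, apply pigeonhole to find $s_i\equiv s_j\bmod k$ with $i<j$, and take $T=\{1,\ldots,r\}\setminus\{i+1,\ldots,j\}$. The complement reformulation and the $k=0$ remark are just expository framing around the same argument.
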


\begin{proof}
Let $s_j = \sum_{n = 1}^j c_n$ for $j \in \{0, \ldots, r\}$.   The sequence $s_0, s_1, \cdots, s_r$ has length $r + 1 > k$, 
so by the pigeonhole principle, $s_i \equiv s_j \bmod k$ for some $i < j$.   This means $s_j - s_i \equiv 0 \bmod k$, 
so choosing $T = \{1, \ldots, r\} \setminus \{i + 1, \ldots, j\}$ completes the proof.  
\end{proof}

\begin{thm}\label{t:maxquasi}
Given a numerical monoid $S = \<g_1, \ldots, g_k\>$ minimally generated by $g_1 < \cdots < g_k$, the maximal factorization length function $M: S \to \NN$ satisfies 
$$M(n) = M(n - g_1) + 1$$ 
for all $n > (g_1 - 1)g_k$.  
\end{thm}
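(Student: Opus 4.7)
The direction $M(n) \ge M(n-g_1)+1$ is immediate: any maximum-length factorization of $n-g_1$ extends to a factorization of $n$ by incrementing the $g_1$-coordinate. The content is the reverse inequality, and for this it suffices to exhibit a maximum-length factorization of $n$ whose $g_1$-coordinate is positive — then removing one copy of $g_1$ produces a factorization of $n-g_1$ of length $M(n)-1$. My plan is to establish this by contradiction: assume $\aa=(0,a_2,\ldots,a_k)$ is a maximum-length factorization of $n$ that uses no $g_1$.

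The first step is to convert the hypothesis $n > (g_1-1)g_k$ into a lower bound on $M(n)$. Since $n = \sum_{i\ge 2} a_i g_i \le g_k \sum_{i\ge 2} a_i$, the hypothesis forces $M(n) = \sum_{i\ge 2} a_i \ge g_1$. This is the only point in the proof where the specific threshold $(g_1-1)g_k$ is used.

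The key move is to apply Lemma~\ref{l:subcol} with modulus $g_1$ to the multiset of $r := M(n)$ generators $c_1,\ldots,c_r$ occurring in $\aa$, each of which lies in $\{g_2,\ldots,g_k\}$ and therefore satisfies $c_i > g_1$. Since $r \ge g_1$, the lemma supplies a proper subset $T \subsetneq \{1,\ldots,r\}$ with $\sum_{i\in T} c_i \equiv \sum_i c_i \pmod{g_1}$; equivalently, the nonempty complement $T^c$ has $\sum_{i\in T^c} c_i = mg_1$ for some integer $m \ge 1$. The strict inequality $g_2 > g_1$ then yields $mg_1 \ge |T^c|\,g_2 > |T^c|\,g_1$, so $m \ge |T^c|+1$.

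Replacing the entries indexed by $T^c$ with $m$ copies of $g_1$ now produces a new factorization of $n$ of length $|T|+m = (M(n)-|T^c|)+m \ge M(n)+1$, contradicting the maximality of $\aa$. I expect the only real obstacle to be spotting that Lemma~\ref{l:subcol} is the right tool with modulus exactly $g_1$; once the swap is set up, the strictness $g_2 > g_1$ automatically forces the replacement to increase the length, and everything else is routine bookkeeping.
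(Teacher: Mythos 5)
Your proposal is correct and follows essentially the same route as the paper: both apply Lemma~\ref{l:subcol} with modulus $g_1$ to the non-$g_1$ generators of a factorization and swap a sub-multiset summing to $mg_1$ for $m$ copies of $g_1$, using $g_i > g_1$ to force a strict length increase. The only difference is organizational --- you run the argument as a contradiction on a maximal factorization with $a_1 = 0$, while the paper shows more generally that any factorization with $a_2 + \cdots + a_k \ge g_1$ is non-maximal and then uses $n > (g_1-1)g_k$ to conclude $a_1 > 0$.
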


\begin{proof}
Fix a factorization $\vec a$ for $n$, and suppose that $a_2 + \cdots + a_k \ge g_1$.  
Since $a_1g_1 + a_2g_2 + \cdots + a_kg_k = n$, we have $a_2g_2 + \cdots + a_kg_k \equiv n \bmod g_1$.  
Viewing this sum as $a_2 + \cdots + a_k$ integers selected from $\{g_2, \ldots, g_k\}$, 
Lemma~\ref{l:subcol} guarantees the existence of $b_2, \ldots, b_k \ge 0$ such that 
(i)  $b_i \le a_i$ for each $i > 1$, 
(ii) $\sum_{i = 2}^k a_i > \sum_{i = 2}^k b_i$, and 
(iii) $b_2g_2 + \cdots + b_kg_k \equiv n \bmod g_1$.  
This implies $b_2g_2 + \cdots + b_kg_k < a_2g_2 + \cdots + a_kg_k$, so there exists $b_1 \ge 0$ so that $\vec b = (b_1, b_2, \ldots, b_k) \in \mathsf Z(n)$.  This gives 
$$(b_1 - a_1)g_1 = \sum_{i = 2}^k (a_i - b_i)g_i > \sum_{i = 2}^k (a_i - b_i)g_1,$$
from which canceling $g_1$ yields $|\vec b| > |\vec a|$.  

Now, suppose that $\vec a \in \mathsf Z(n)$ is maximal.  The above argument implies that $a_2 + \cdots + a_k < g_1$.  
In particular, if $n > (g_1 - 1)g_k$, we must have $a_1 > 0$.  
This means $\vec a - \vec e_1 \in \mathsf Z(n - g_1)$, so we have $M(n - g_1) \ge |\vec a| - 1$, 
and since $\vec a$ has maximal length, we have $M(n - g_1) = |\vec a| - 1$.  This completes the proof.  
\end{proof}

The proof of the following analogous result is almost identical to the proof of Theorem~\ref{t:maxquasi} and hence omitted.  

\begin{thm}\label{t:minquasi}
Given a numerical monoid $S = \<g_1, \ldots, g_k\>$ minimally generated by $g_1 < \cdots < g_k$, the minimal factorization length function $m: S \to \NN$ satisfies
$$m(n) = m(n - g_k) + 1$$
for all $n > (g_k - 1)g_{k-1}$.  
\end{thm}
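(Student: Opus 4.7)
The plan is to mirror the proof of Theorem~\ref{t:maxquasi}, swapping the roles of the smallest generator $g_1$ and the largest generator $g_k$ and inverting ``maximal'' to ``minimal'' throughout. First I would fix $\vec a \in \mathsf Z(n)$ satisfying $a_1 + \cdots + a_{k-1} \ge g_k$. Since $a_1g_1 + \cdots + a_{k-1}g_{k-1} \equiv n \bmod g_k$, I would view this sum as $a_1 + \cdots + a_{k-1}$ integers drawn with repetition from $\{g_1, \ldots, g_{k-1}\}$ and apply Lemma~\ref{l:subcol} with modulus $g_k$ to extract a proper subcollection whose sum is still $\equiv n \bmod g_k$. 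This yields $b_1, \ldots, b_{k-1} \ge 0$ with $b_i \le a_i$ for each $i < k$, $\sum_{i<k} b_i < \sum_{i<k} a_i$, and $\sum_{i<k} b_i g_i \equiv n \bmod g_k$.

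Next I would set $\delta = \sum_{i<k}(a_i - b_i)g_i$, a positive multiple of $g_k$, and define $b_k = a_k + \delta/g_k$ so that $\vec b = (b_1, \ldots, b_k) \in \mathsf Z(n)$. To verify $|\vec b| < |\vec a|$, I would use $g_i \le g_{k-1} < g_k$ for each $i < k$ to conclude
$$\delta \le g_{k-1}\sum_{i<k}(a_i - b_i) < g_k\sum_{i<k}(a_i - b_i),$$
so that $\delta/g_k < \sum_{i<k}(a_i - b_i)$, which immediately gives the strict inequality $|\vec b| = \sum_{i<k} b_i + a_k + \delta/g_k < \sum_i a_i = |\vec a|$. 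Consequently, every minimal factorization $\vec a$ of $n$ must satisfy $a_1 + \cdots + a_{k-1} < g_k$.

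Finally, under the hypothesis $n > (g_k - 1)g_{k-1}$, I would observe that $a_k = 0$ would force $n = \sum_{i<k} a_i g_i \le (g_k - 1)g_{k-1}$, a contradiction; hence $a_k \ge 1$ in any minimal factorization. Then $\vec a - \vec e_k \in \mathsf Z(n - g_k)$ has length $|\vec a| - 1$, giving $m(n - g_k) \le m(n) - 1$. The reverse inequality $m(n) \le m(n - g_k) + 1$ follows by appending a copy of $g_k$ to any minimal factorization of $n - g_k$, so equality holds.

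I do not anticipate a genuine obstacle here, since the argument is formally dual to that of Theorem~\ref{t:maxquasi}. The one place requiring mild care is the strict length-decrease $|\vec b| < |\vec a|$, which depends jointly on the strict inequality $g_{k-1} < g_k$ (valid since the generators are listed in strictly increasing order) and on the lemma's guarantee that the extracted subcollection is proper, so that $\sum_{i<k}(a_i - b_i) > 0$.
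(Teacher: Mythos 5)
Your proposal is correct and is exactly the argument the paper intends: the paper omits the proof of Theorem~\ref{t:minquasi}, stating only that it is ``almost identical'' to that of Theorem~\ref{t:maxquasi}, and you have carried out that dualization faithfully (applying Lemma~\ref{l:subcol} with modulus $g_k$ to the subcollection drawn from $\{g_1,\ldots,g_{k-1}\}$, and using $g_i \le g_{k-1} < g_k$ for the strict length decrease). No gaps.
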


\begin{figure}
\begin{center}
\includegraphics[width=2.9in]{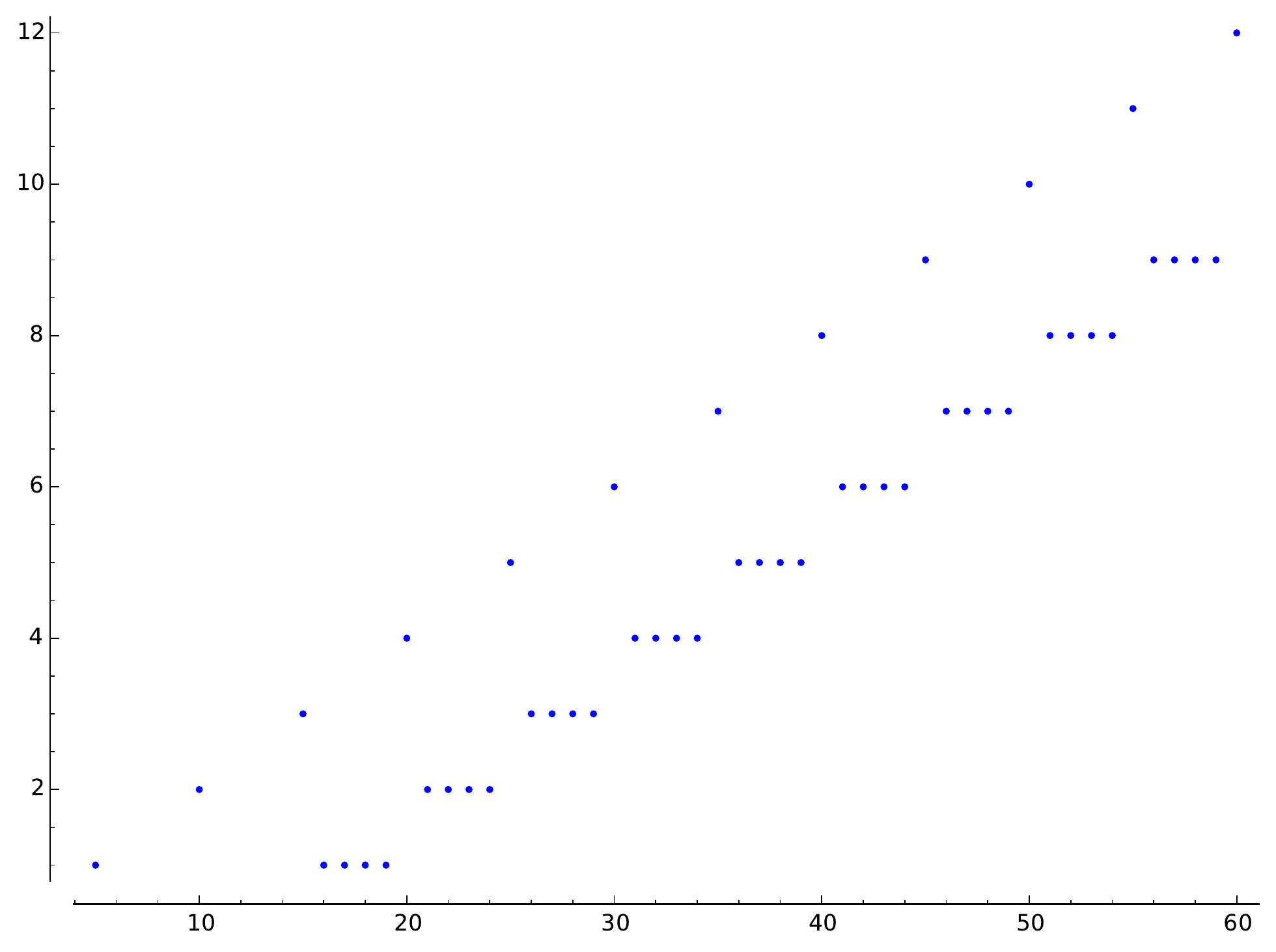}
\hspace{0.2in}
\includegraphics[width=2.9in]{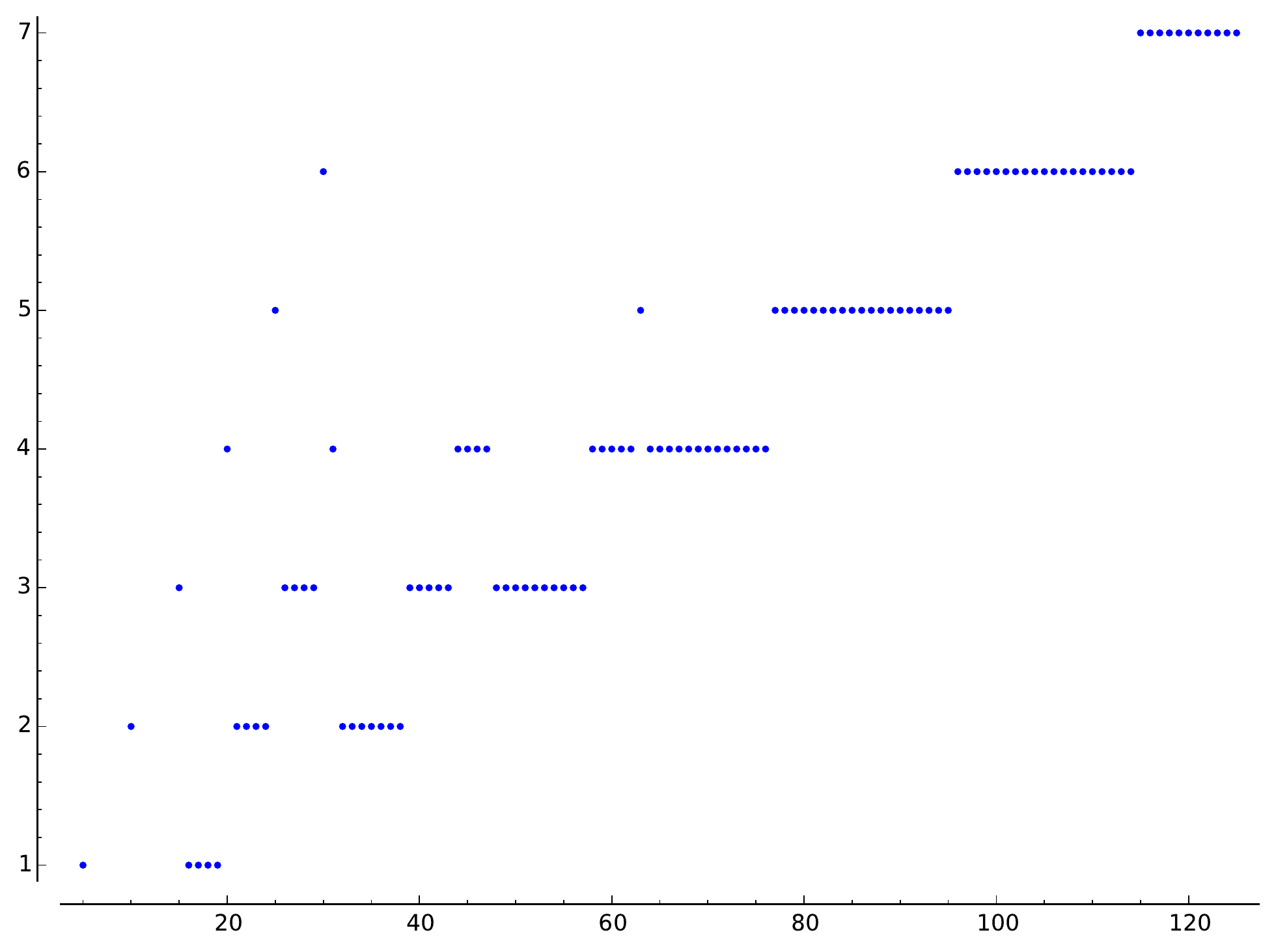}
\end{center}
\caption{Plots of $M(n)$ (left) and $m(n)$ (right) for the numerical monoid $\<5,16,17,18,19\>$.}
\label{f:fact_plots}
\end{figure}

\begin{example}\label{e:factorizationgraphs}
If a numerical monoid $S$ has $g_1$ as its smallest minimal generator then Theorems~\ref{t:maxquasi} and~\ref{t:minquasi} state that $M: S \to \NN$ as a function will eventually manifest graphically as a collection of $g_1$ discrete lines with a common slope of $1/g_1$.  Similarly, if $g_k$ is the largest minimal generator, then the graph of $m: S \to \NN$ will eventually appear as a collection of $g_k$ discrete lines with common slope $1/g_k$.  Figure~\ref{f:fact_plots}, which shows the functions $M(n)$ and $m(n)$ for $S = \<5,16,17,18,19\>$, demonstrates this concept.
\end{example}

Since the elasticity of an element $n$ in a numerical monoid is given by the quotient of $M(n)$ and $m(n)$, we use Theorems~\ref{t:maxquasi} and~\ref{t:minquasi} to provide a characterization of $R(S)$.

\begin{cor}\label{c:elasticityset}
Fix a numerical monoid $S$ minimally generated by $g_1 < \cdots < g_k$.  
\begin{enumerate}[(a)]
\item For $n \ge g_{k-1}g_k$, we have 
$$\rho(n + g_1g_k) = \frac{M(n) + g_k}{m(n) + g_1}.$$
\item The set $R(S)$ is the union of a finite set and a collection of $g_1g_k$ monotone increasing sequences, each converging to $g_k/g_1$.  
\end{enumerate}
\end{cor}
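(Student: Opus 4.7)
The plan is to prove (a) by iterating Theorems~\ref{t:maxquasi} and~\ref{t:minquasi}, then use (a) together with a short monotonicity calculation to deduce (b).

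For (a), I would apply Theorem~\ref{t:maxquasi} successively $g_k$ times to the element $n + g_1g_k$, each application decreasing the argument by $g_1$, to obtain $M(n + g_1g_k) = M(n) + g_k$. The hypothesis $n \ge g_{k-1}g_k$ ensures the smallest argument encountered, $n + g_1$, still exceeds the threshold $(g_1-1)g_k$ required by the theorem. Symmetrically, applying Theorem~\ref{t:minquasi} exactly $g_1$ times to $n + g_1g_k$, decreasing the argument by $g_k$ at each step, yields $m(n + g_1g_k) = m(n) + g_1$; the binding check is $n + g_k > (g_k-1)g_{k-1}$, again guaranteed by the hypothesis. Taking the ratio proves~(a).

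For (b), iterating (a) itself gives, for $n \in S$ with $n \ge g_{k-1}g_k$ and each integer $j \ge 0$,
$$\rho(n + jg_1g_k) = \frac{M(n) + jg_k}{m(n) + jg_1}.$$
A direct computation shows the difference between consecutive terms in $j$ has the same sign as $m(n)g_k - M(n)g_1$, which is nonnegative since Theorem~\ref{t:holdenmoore} gives $\rho(n) \le g_k/g_1$. Hence each such sequence is monotone non-decreasing (constant precisely when $\rho(n) = g_k/g_1$) and tends to $g_k/g_1$ as $j \to \infty$. Because $S$ is cofinite in $\NN$, each residue class modulo $g_1g_k$ contains a smallest representative $n_r \in S$ with $n_r \ge g_{k-1}g_k$; there are exactly $g_1g_k$ such representatives. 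The sequences $\Sigma_r = \{\rho(n_r + jg_1g_k)\}_{j \ge 0}$ are the $g_1g_k$ monotone sequences asserted, and together with the finite set $\{\rho(n) : n \in S,\ n < g_{k-1}g_k\}$ exhaust $R(S)$.

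The main obstacle is merely verifying that the iterative applications of Theorems~\ref{t:maxquasi} and~\ref{t:minquasi} in (a) remain valid throughout, which reduces to the inequality $n \ge g_{k-1}g_k > \max\bigl((g_1-1)g_k - g_1,\ (g_k-1)g_{k-1} - g_k\bigr)$, holding because $g_{k-1} \ge g_1$. The rest is elementary: monotonicity follows from the bound $\rho(S) = g_k/g_1$ in Theorem~\ref{t:holdenmoore}, the limit $g_k/g_1$ is immediate from the ratio formula, and the count $g_1g_k$ is dictated by the product of the two step sizes appearing in the iterations used to prove~(a).
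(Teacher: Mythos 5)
Your proposal is correct and follows essentially the same route as the paper: part (a) by iterating Theorems~\ref{t:maxquasi} and~\ref{t:minquasi} ($g_k$ and $g_1$ times respectively, with the threshold checks you note), and part (b) by observing that the resulting sequences $\rho(n + jg_1g_k) = (M(n)+jg_k)/(m(n)+jg_1)$ are monotone with limit $g_k/g_1$. The paper's own proof is just a terser version of this argument (indexing the sequences by the window $g_{k-1}g_k \le n < g_{k-1}g_k + g_1g_k$ rather than by residue classes, which amounts to the same thing); your filled-in details, including the sign computation $m(n)g_k - M(n)g_1 \ge 0$, are all accurate.
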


\begin{proof}
Part~(a) follows directly from Theorems~\ref{t:maxquasi} and~\ref{t:minquasi}.  
From this, it follows that for $g_{k-1}g_k \le n < g_{k-1}g_k + g_1g_k$, the sequence 
$$\rho(n), \rho(n + g_1g_k), \rho(n + 2g_1g_k), \ldots$$
is monotone increasing and converges to $g_k/g_1$.  This completes the proof.  
\end{proof}

\begin{remark}\label{r:elasticities}
Theorem~\ref{t:holdenmoore} states that the only accumulation point of the elasticity set $R(S)$ is its maximum.  Corollary~\ref{c:elasticityset}, on the other hand, gives a characterization of the entire set $R(S)$, from which several other results from~\cite{elasticity} can be recovered.   In particular, the characterization of the set of elasticities provided in Corollary~\ref{c:elasticityset} describes $R(S)$ as a union of a finite set and $g_1g_k$ monotone increasing sequences, each converging to $g_k/g_1$, which clearly implies that the only accumulation point is~$g_k/g_1$.
\end{remark}

\begin{figure}
\begin{center}
\includegraphics[width=2.9in]{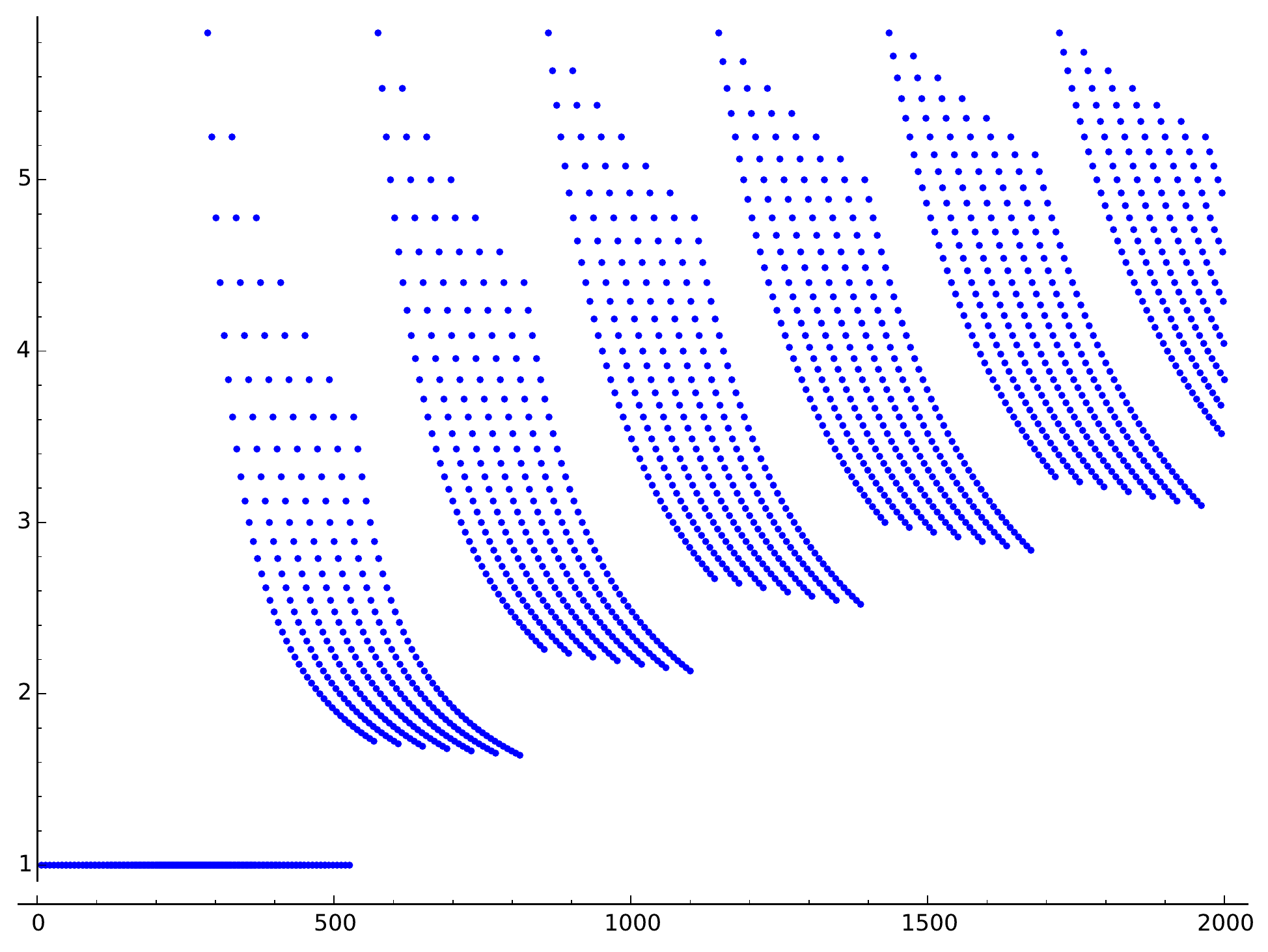}
\hspace{0.2in}
\includegraphics[width=2.9in]{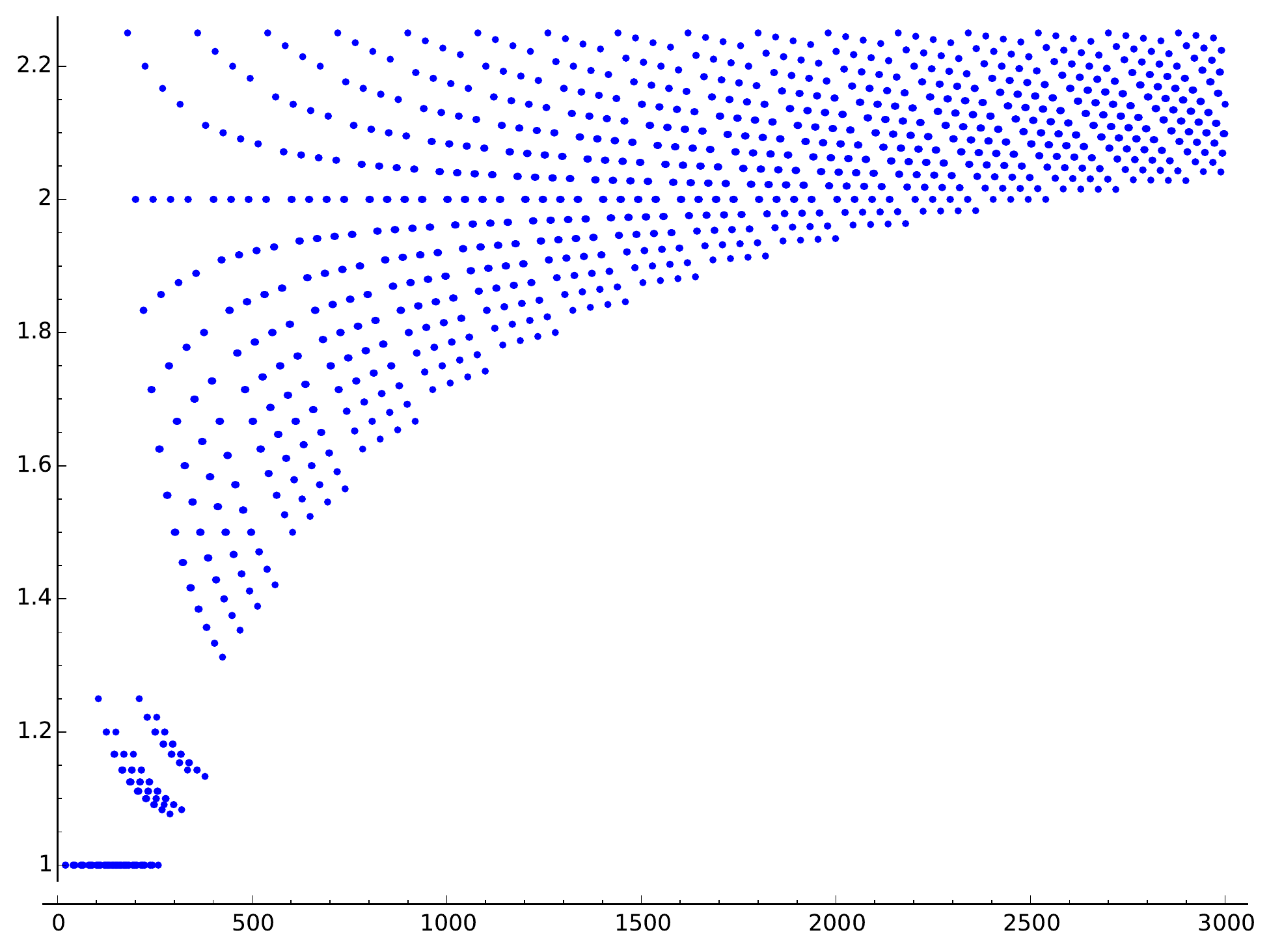}
\end{center}
\caption{Plots of the elasticities of elements from the monoids $\<7,41\>$ (left) and $\<20,21,45\>$ (right).}
\label{f:elast_plots}
\end{figure}

\begin{example}\label{e:rho_plots}

The elasticity graphs for numerical monoids $\<7,41\>$ and $\<20,21,45\>$ are given in Figure~\ref{f:elast_plots}.  The latter of numerical monoids is not arithmetical and demonstrates that the uniformity of the ``slices'' enjoyed by arithmetical numerical monoids is not present, especially for smaller values.  Regardless, for any numerical monoid $S = \<g_1, \ldots, g_k\>$, the characterization of $R(S)$ provided in Corollary~\ref{c:elasticityset} shows that $\rho$ can be eventually described as $g_1g_k$ monotone increasing sequences that limit to $g_k/g_1$, where each sequence contains precisely one point in each ``slice.''
  
\end{example}

\section{Acknowledgements}

The authors would like to thank Dr. Scott Chapman, Alfred Geroldinger, and Sherilyn Tamagawa for various helpful conversations and insights.  The third author is funded by National Science Foundation grant DMS-1045147.



\begin{thebibliography}{HHHKR10}
\raggedbottom


\bibitem{setoflengthsets}
J. Amos, S. Chapman, Natalie Hine, Jo\~ao Paix\~ao, 
\emph{Sets of lengths do not characterize numerical monoids}, 
Integers 7 (2007), \#A50.

\bibitem{delta}
C. Bowles, S. T. Chapman, N. Kaplan, and D. Reiser, 
\emph{On delta sets of numerical monoids}, 
J. Algebra Appl.  5(2006), 1--24.

\bibitem{elasticity} 
S. T. Chapman, M. T. Holden, and T. A. Moore, 
\emph{Full elasticity in atomic monoids and integral domains}, Rocky Mountain J. Math. 36 (2006), 1437--1455. 

\bibitem{geroldingerlengthsets} 
A. Geroldinger, 
\emph{A structure theorem for sets of lengths}, 
Colloq. Math., 78 (1998), pp. 225--259.

\bibitem{nonuniq}
A. Geroldinger, F. Halter-Koch, 
\emph{Nonunique factorization}, 
Algebraic, Combinatorial and Analytic Theory, Pure and Applied Mathematics, vol. 278, Chapman \& Hall/CRC, 2006.

\bibitem{quasi}
C. O'Neill and R. Pelayo, 
\emph{On the linearity of $\omega$-primality in numerical monoids}, 
J. Pure and Applied Algebra. 218 (2014) 1620--1627.

\bibitem{realizthm}
W. Schmid, 
\emph{A realization theorem for sets of lengths}, 
J. Number Theory. 129 no. 5 (2009) 990--999.


\end{thebibliography}
\end{document}